\newtheorem{theorem}{Theorem}
\newtheorem{lemma}{Lemma}
\newtheorem{remark}{Remark}
\def\Xint#1{\mathchoice
	{\XXint\displaystyle\textstyle{#1}}
	{\XXint\textstyle\scriptstyle{#1}}
	{\XXint\scriptstyle\scriptscriptstyle{#1}}
	{\XXint\scriptscriptstyle\scriptscriptstyle{#1}}
	\!\int}
\def\XXint#1#2#3{{\setbox0=\hbox{$#1{#2#3}{\int}$ }
		\vcenter{\hbox{$#2#3$ }}\kern-.6\wd0}}
\def\dashint{\Xint-}
\def\Div{{\rm div}}
\begin{document}
	\title[Low Mach number limit and Convergence Rate]{Low Mach Number Limit and Far Field Convergence Rates of Potential Flows in Multi-Dimensional Nozzles With an Obstacle Inside}
\author{Lei Ma}
\address{School of Mathematical Sciences, Shanghai Jiao Tong University, 800 Dongchuan Road, Shanghai, 200240, China}
\email{yutianml@sjtu.edu.cn}
\author{Tian-Yi Wang}
\address{Department of Mathematics, School of Science, Wuhan University of Technology, 122 Luoshi Road, Wuhan, Hubei, 430070, China}
\email{tianyiwang@whut.edu.cn; wangtianyi@amss.ac.cn}
	\author{Chunjing Xie}
\address{School of mathematical Sciences, Institute of Natural Sciences,
Ministry of Education Key Laboratory of Scientific and Engineering Computing,
and SHL-MAC, Shanghai Jiao Tong University, 800 Dongchuan Road, Shanghai, China}
\email{cjxie@sjtu.edu.cn}

	\date{}
	\begin{abstract}
			This paper considers the low Mach number limit and far field convergence rates of steady Euler flows with external forces in three-dimensional infinitely long nozzles with an obstacle inside. First, the well-posedness theory for both incompressible and compressible subsonic flows with external forces in multidimensional nozzle with an obstacle inside are established by several uniform estimates. The uniformly subsonic compressible flows tend to the incompressible flows as quadratic order of Mach number as the compressibility parameter goes to zero. Furthermore, we also give the convergence rates of both incompressible flow and compressible flow at far fields as the boundary of nozzle goes to flat even when the forces do not admit convergence rate at far fields. The convergence rates obtained for the flows at far fields clearly describe the effects of the external force.
			\\[3mm]{\bf Keywords:}   Subsonic flows, Potential equation, Nozzles, Obstacle, Low Mach number limit, Convergence rates
	\end{abstract}
	\maketitle
\section{Introduction and main results}\label{introdunction}
Both incompressible and compressible Euler equations can be used to describe the motion of the fluid dynamics and they give rise to many significant problems in mathematical theory. One of the important topics is the low Mach number limit problem which considers the convergence of compressible flows to the incompressible ones as the compressibility parameter goes to zero.
The isentropic compressible Euler equations with the external force are described as follows
\begin{equation}\label{Eulereq}
\begin{cases}
\Div(\rho  {\bf u}) = 0,   \\
\Div(\rho {\bf u}\otimes {\bf u})+\nabla p=\rho {\bf F},
\end{cases}
\end{equation}
where $\rho$ represents density, ${\bf u}=(u_1,u_2,u_3)$ is the flow velocity, $p$ is the pressure which is a function of $\rho$, and ${\bf F}$ describe the external forces, respectively. In this paper, denote
\begin{equation}
p=\frac{\tilde p(\rho)-\tilde p(1)}{\epsilon^2},
\end{equation} where $\epsilon$ is the compressibility parameter\cite{MR2157144}. We always assume \begin{equation}\label{pressurecondtion}
\tilde p^{\prime}(\rho)>0\quad \text{and}\quad \rho\tilde  p^{\prime\prime}(\rho)+2\tilde p'(\rho)\geq0, \quad \text{for } \rho>0.
\end{equation} The typical example is the polytropic gas for which the pressure is given by $\tilde p=\rho^{\gamma}$ with $\gamma>1$  called the adiabatic exponent.

Assume the flow is irrotational, $i.e.,$
\begin{equation}\label{irrotational}
curl\  {\bf u}=0,
\end{equation}
then, in the simply connected domain, there exists a potential function $\varphi$ such that
\begin{equation}
\nabla\varphi={\bf u}.
\end{equation}
Due to the irrotational condition \eqref{irrotational}, one may assume that the external force is conservative, which means that there exists a function $\phi$ such that
\begin{equation*}{\bf F}=\nabla\phi.\end{equation*}

Denote \begin{equation}c(\rho)=\sqrt{p^{\prime}(\rho)}=\frac{\sqrt{\tilde p'(\rho)}}{\epsilon}, \end{equation} which is called the sound speed. And the Mach number is defined as
\begin{equation}
M=\frac{|{\bf u}|}{c}=\frac{\epsilon|{\bf u}|}{\sqrt{\tilde p'(\rho)}}.
\end{equation}
The flow is said to be subsonic, sonic, supersonic when $M<1$, $=1$, $>1$, respectively. Therefore, if $\frac{|{\bf u}|}{\sqrt{\tilde p'(\rho)}}$ is bounded and has a lower positive bound, one has $M\sim O(\epsilon)$.

We consider the domain to be a nozzle $\tilde\Omega$ which contains an obstacle $\Omega'$ inside, which models the wind tunnel in reality.  Moreover,  assume  that $\partial\tilde\Omega$ and $\partial\Omega'$ are $C^{2,\alpha}$. Let $B_1(0)\subset \mathbb{R}^2$ be the unit disk centered at origin.  Let
\begin{equation}\label{cylinder}
\mathcal C=B_1(0)\times(-\infty,+\infty)\end{equation} be the perfect cylinder.
Suppose that there exists an invertible $C^{2,\alpha}$ map $\mathcal R:\tilde\Omega\rightarrow \mathcal C $ satisfying
\begin{equation}\label{tildeOmega}
\begin{cases}
\mathcal R(\partial \tilde\Omega)=\partial\mathcal C,\\
\mathcal R(\tilde\Omega\cap\{x_3=k\})=B_1(0)\times\{y_3=k\}\quad \text{for any }k\in \mathbb R,\\
\|\mathcal R\|_{C^{2,\alpha}}+	\|\mathcal R^{-1}\|_{C^{2,\alpha}}\leq C,
\end{cases}
\end{equation}
where $C$ is a constant.
Furthermore,  assume that there exists a $C^{2,\alpha}$ map $\mathcal Y$ such that
\begin{equation}\label{Omega'}
\mathcal Y(\Omega')\rightarrow \mathcal B,
\end{equation} where $\mathcal B$ is the three dimensional unit ball with center at the origin. Using the cylindrical coordinates, $\tilde\Omega$ and $\Omega'$ can be written as
\begin{equation}\label{Omega}
\tilde\Omega=\bigg\{(r,\tau,x_3)\big|\,x_1=r\cos\tau,\, x_2=r\sin\tau,\, r< f_1(\tau,x_3),\,\tau\in[0,2\pi),\,x_3\in\mathbb{R}\bigg\}
\end{equation} and
\begin{equation}\label{Omega_prime} \Omega^\prime=\bigg\{(r,\tau,x_3)\big|\,x_1=r\cos\tau,\, x_2=r\sin\tau, r< f_2(\tau,x_3),\,\tau\in[0,2\pi),\,L_1\leq x_3\leq L_2\bigg\},
\end{equation}
respectively, where $L_1$ and $L_2$ are constants.
Assume
\begin{equation}
0\leq f_2\leq C\quad \text{and}\quad\frac{1}{C}\leq f_1-f_2\leq C\quad\text{for any}\, \tau\in [0,2\pi),\,x_3\in [L_1,L_2]
\end{equation}
and
\begin{equation}
\frac{1}{C}\leq f_1\leq C\quad\text{for any}\, \tau\in [0,2\pi),\,x_3\in\mathbb{R},
\end{equation}
where $C$ is a positive constant. Without loss of generality, assume the origin $O\in\Omega'$. In the rest of the paper, denote
\begin{equation}
{\Omega}=\tilde\Omega\setminus\Omega'\quad\text{and}\quad	\Sigma_t=\Omega\cap\{x_3=t\}.
\end{equation}
The slip boundary condition is supplemented on $\partial \Omega$, i.e.,
\begin{equation}\label{slipboundary}
{\bf u}\cdot{\bf n}=0 \quad \text{on}\,\, \partial \Omega,
\end{equation}
where $\textbf{n}=(n_1,n_2,n_3)$ is unit outer normal of $\partial \Omega$.
On any cross section $S_0$ of $\Omega$, it follows from the continuity equation in \eqref{Eulereq} that
\begin{equation}\label{massflux}
\int_{S_0}\rho {\bf u}\cdot {\bf{l}}ds=m_0,
\end{equation} where $\textbf{l}$ is the unit normal pointed to the right of $S_0$. $m_0$ is called the mass flux of the flow across the nozzle, which is conserved through each cross section.

The well-posedness problem on compressible subsonic Euler flows in infinitely long nozzles was posed in \cite{MR0096477}. The first rigorous proof for the well-posedness of irrotational flows in two dimensional nozzles was established in \cite{MR2375709} via the stream function formulation. The results were extended to the three dimensional axially symmetric case in \cite{MR2644144}. The uniformly subsonic flows in general multidimensional nozzles were obtained via potential formulation and variational method in \cite{MR2824469}. Afterwards, the results were extended to the subsonic flows with external force in \cite{MR3638912}. Recently, the optimal convergence rates of velocity at far field was given in \cite{MaLei}. When the vorticity of subsonic flow is not zero, the existence of solutions was established in \cite{MR2607929} as long as the mass flux is less than a critical value and the variation of the Bernoulli's function is small. Later on, the well-posedness of subsonic flows with large vorticity is proved in \cite{MR3196988} under the condition that the velocity at upstream is convex. It is worth pointing that the optimal convergence rates of the flows at far fields are also established in \cite{MR3196988}. When the conditions of the smallness of variation of Bernoulli's function or the convexity  of velocity are removed, the existence of general subsonic flows with characteristic discontinuity was obtained in \cite{MR3914482}. There are many literatures on the subsonic flows in infinitely long nozzles, see\cite{MR2879733,MR2737815,MR3537905,MR3861290} and reference therein.

For the exterior problems, which is to study the flow past around the given obstacles, the existence of two dimensional subsonic flows was first studied in \cite{Frankl1934Keldysh}. Later on, the existence of subsonic solution around a smooth body was established in \cite{shiffman1952existence} when the circulation is prescribed and the free stream Mach number is less than a critical value. Bers proved the existence of subsonic flows even when the body has a sharp corner \cite{Bers1954Existence}. In \cite{CPA:CPA3160100102}, the uniqueness and the convergence rates of subsonic plane flows were given. The existence of subsonic flows past a three dimensional body was first investigated in \cite{Finn1957Three} and later established in \cite{Dong1977Nonlinear,Dong1993Subsonic} when the Mach number of the free stream flow is less than a critical value. When there is an external force, the existence of subsonic flows past a body was established in \cite{GuandWang}.

A physical interesting problem is to study the low Mach number limit of the compressible subsonic flows \cite{MR2549370,MR2157144}. More precisely, as the compressibility parameter tends to zero, whether the solutions of compressible Euler equations \eqref {Eulereq} converge to that of the homogeneous incompressible Euler equations
\begin{equation}
\begin{cases}
\Div {\bf u}=0,\\
\Div({\bf u}\otimes{\bf u})+\nabla p={ \bf F},
\end{cases}
\end{equation}
where ${\bf u}$ and $p$ represent the velocity and pressure, respectively.

The first mathematical theory of the low Mach number limit for steady irrotational flows was studied in  \cite[Sect. 47]{MR0119651} and \cite{MR0176702} where  the solutions were written as power series of Mach number. Klainerman and Majda \cite{MR615627,MR668409} proceeded with the study of the convergence of unsteady compressible flows to  incompressible case for the suitable data. Later on, Ukai\cite{MR849223} improved the low Mach number limits for the general data with help of the decay property of acoustic waves. An important progress is made by M$\acute{e}$tivier and Schochet\cite{MR1834114} where they proved the low Mach number limit in the whole space for general initial data of full Euler systems.  These results were extended to the exterior problems in \cite{MR2106119}. The low Mach number limit for one dimensional problem was investigated in the $BV$ space \cite{MR2403601}. The first rigorous analysis of subsonic flows for the steady Euler equations past a body, in infinitely long nozzles, and largely open nozzle were obtained in \cite{2019arXiv190104320L}, \cite{2019arXiv190101048L}, and \cite{Zhang-Wang}, respectively.

The aim of this paper is to investigate the well-posedness of subsonic flows with force through infinitely long nozzles with an obstacle inside and the low Mach number limit of the associated flows. The next key issue is to study the convergence rates of  the velocity at far fields.

In order to study the low Mach number limit, we first investigate  the incompressible flows in $\Omega$.
 Find $\bar{\bf u}$ and $\bar p$ satisfy
\begin{equation}\label{problemI1}
\begin{cases}
\Div {\bf \bar u}=0, &\text{ in $\Omega$,}\\
\Div({\bf \bar u}\otimes{\bf \bar u})+\nabla \bar p=\nabla\phi, &\text{ in $\Omega$,}\\
{\bf \bar u}\cdot{\bf n}=0, &\text{ on } \partial\Omega,\\
\int_{\Sigma_t} \bar{\bf u}\cdot {\bf{l}}ds=m_0,
\end{cases}
\end{equation}
where $\textbf{n}$ is the unit outer normal of $\Omega$ and  $\textbf{l}$ is the unit normal pointed to the right of $\Sigma_t$.
\begin{theorem}\label{incompressible case}
	For any $m_0>0$, suppose \begin{equation}\label{consevative force}
	\phi\in L^\infty(\Omega)\text{ and } \nabla\phi\in L^q(\Omega) \text{ for } q>3,\end{equation} there exists a solution $({\bf \bar u},\bar p)\in\big(C^{\alpha}(\Omega) \big)^4$ to problem \eqref{problemI1}. Moreover, let $K$ be a large positive number and $q_0$  be the constant satisfying $q_0|B_1(0)|=m_0$. \\ (i) If the nozzle is flat at the downstream, i.e.,
	\begin{equation}\label{Icylindercase} \Omega\cap\{x_3>K\}=B_1(0)\times(K,+\infty), \end{equation} there exists a positive constant $\mathfrak d_1$ such that
	\begin{equation}\label{Icylinder boundary}
	|{\bf \bar u}-(0,0, q_0)|\leq Ce^{-\mathfrak d_1 x_3}, \quad\text{for $x_3>K$};
	\end{equation}\\
	(ii) If the boundary of the nozzle satisfies
	\begin{equation}\label{Iboundary_algebratic_rateii}
	\sum\limits_{k=0}^{2}\big|x_3^k\partial_{3}^k(f_1-1)\big|\leq C{x_3^\mathfrak {-a_1}},\quad\text{for $x_3>K$},
	\end{equation}
	with $\mathfrak a_1>0$, then the velocity satisfy
	\begin{equation}\label{Ibdecay}
	|{\bf \bar u}-(0,0, q_0)|\leq Cx_3^{-\mathfrak a_1},
	\end{equation}
	where $C$ is a constant independent of  $x_3$.\end{theorem}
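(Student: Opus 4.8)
I would look for a potential flow, imposing $\operatorname{curl}\bar{\bf u}=0$ alongside \eqref{problemI1}. Since $\Omega=\tilde\Omega\setminus\Omega'$ is simply connected (a cylinder-like domain with a ball-like obstacle removed has trivial fundamental group in dimension three), this together with $\Div\bar{\bf u}=0$ allows one to write $\bar{\bf u}=\nabla\bar\varphi$ with $\bar\varphi$ harmonic in $\Omega$, satisfying $\partial_{\bf n}\bar\varphi=0$ on $\partial\Omega$ and $\int_{\Sigma_t}\nabla\bar\varphi\cdot{\bf l}\,ds=m_0$; the Neumann condition and the divergence theorem make this flux independent of $t$, so it suffices to prescribe it on a single section. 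I would construct $\bar\varphi$ by exhaustion: write $\bar\varphi=h+\Phi$, where $h$ is a fixed smooth function with $\partial_{\bf n}h=0$ on $\partial\Omega$, bounded gradient, and flux tending to $m_0$ at both ends (e.g.\ $h=q_0x_3$ on the flat downstream part), so that $\Phi$ should solve a Laplace-type problem with homogeneous Neumann data and zero net flux; solve the corresponding problem on the truncated domains $\Omega_N=\Omega\cap\{|x_3|<N\}$ by a direct variational method, derive $N$-uniform estimates, and pass to the limit by a diagonal argument. The a priori bound would come from the energy identity combined with a Poincar\'e inequality on the cross-sections of the nozzle and interior/boundary Schauder estimates (note that $\partial\tilde\Omega$ and $\partial\Omega'$ are $C^{2,\alpha}$ and disjoint, so $\partial\Omega$ has no corners), giving $\bar\varphi\in C^{2,\alpha}_{\mathrm{loc}}(\overline\Omega)$ and $\bar{\bf u}=\nabla\bar\varphi\in C^{1,\alpha}(\overline\Omega)$. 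Because $\bar{\bf u}$ is irrotational and divergence free, $\Div(\bar{\bf u}\otimes\bar{\bf u})=\nabla(\tfrac12|\bar{\bf u}|^2)$, so the momentum equation reduces to $\nabla(\tfrac12|\bar{\bf u}|^2+\bar p-\phi)=0$ and Bernoulli's law defines $\bar p=\phi-\tfrac12|\bar{\bf u}|^2$ up to an additive constant; since $\nabla\phi\in L^q$ with $q>3$ and $\phi\in L^\infty$ give $\phi\in C^\alpha(\Omega)$ by Morrey's embedding, one gets $\bar p\in C^\alpha(\Omega)$, which completes the $\bigl(C^\alpha(\Omega)\bigr)^4$ regularity. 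Establishing these $N$-uniform estimates in the presence of the obstacle is one of the two points I expect to require real work.

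For part (i), choose $K$ past the obstacle, so that $\Omega\cap\{x_3>K\}=B_1(0)\times(K,+\infty)$ and $\bar\varphi$ is harmonic there with homogeneous Neumann data on $\partial B_1(0)\times(K,+\infty)$. Expanding in the Neumann eigenfunctions $\{e_n\}$ of $-\Delta'$ on $B_1(0)$, with eigenvalues $0=\mu_0<\mu_1\le\mu_2\le\cdots$, one obtains $\bar\varphi=a_0+b_0x_3+\sum_{n\ge1}\bigl(A_ne^{\sqrt{\mu_n}\,x_3}+B_ne^{-\sqrt{\mu_n}\,x_3}\bigr)e_n$. The flux condition forces $b_0|B_1(0)|=m_0$, i.e.\ $b_0=q_0$, while the uniform bound on $\nabla\bar\varphi$ kills every growing mode, so $A_n=0$ for all $n\ge1$. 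Hence $\bar{\bf u}-(0,0,q_0)=\nabla\bigl(\sum_{n\ge1}B_ne^{-\sqrt{\mu_n}\,x_3}e_n\bigr)$, and bounding the coefficients $B_n$ by the finite Dirichlet energy of $\bar\varphi-q_0x_3$ over $B_1(0)\times(K,\infty)$ yields $|\bar{\bf u}-(0,0,q_0)|\le Ce^{-\mathfrak d_1 x_3}$ with $\mathfrak d_1=\sqrt{\mu_1}$, the square root of the first nonzero Neumann eigenvalue of the unit disk.

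For part (ii), take $K>L_2$, so the obstacle is again invisible on $\{x_3>K\}$ and the domain there is $\{r<f_1(\tau,x_3)\}$. I would flatten it by a change of variables $y=\mathcal F(x)$ with $y_3=x_3$ and $|y'|=r/f_1$, mapping it onto $B_1(0)\times(K,+\infty)$; under $\mathcal F$ the function $\hat\varphi(y)=\bar\varphi(x)$ solves a uniformly elliptic equation $\Div(A\nabla\hat\varphi)=0$ with a conormal boundary condition on $\partial B_1(0)\times(K,+\infty)$, where $A-I$ and the relevant derivatives are controlled by $f_1-1$ and its derivatives, so that $|A-I|\le Cx_3^{-\mathfrak a_1}$ by \eqref{Iboundary_algebratic_rateii}. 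Setting $\tilde\psi=\hat\varphi-q_0y_3$, one has $\Div(A\nabla\tilde\psi)=-q_0\Div(Ae_3)$ with conormal data $-q_0(Ae_3)\cdot\nu$, both inhomogeneities decaying in $x_3$ at a rate governed by $\mathfrak a_1$; since the flat limit operator has the spectral gap $\mu_1>0$ and $\tilde\psi$ carries no flux, a weighted energy estimate with weight $x_3^{2\beta}$---testing against $x_3^{2\beta}$ times the fluctuation of $\tilde\psi$ about its cross-sectional mean, using the cross-sectional Poincar\'e inequality, and absorbing the terms coming from $\partial_3(x_3^{2\beta})$ and from $A-I$---gives $\int x_3^{2\beta}|\nabla\tilde\psi|^2<\infty$ for $\beta$ up to $\mathfrak a_1$. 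Iterating this and then invoking local $W^{1,\infty}$ (Schauder) estimates to convert energy decay into a pointwise bound yields $|\nabla\tilde\psi|\le Cx_3^{-\mathfrak a_1}$, and transforming back, using $D\mathcal F-I=O(x_3^{-\mathfrak a_1})$, gives \eqref{Ibdecay}. Reaching the sharp exponent $\mathfrak a_1$---rather than $\mathfrak a_1-\varepsilon$---requires extra care at the borderline weight and careful bookkeeping of which derivatives of $A-I$ decay and at what rate, and this is the second, and I expect the harder, part of the proof. The overall scheme is close in spirit to the far-field analysis in \cite{MaLei}, the obstacle being harmless here since it is compact and therefore absent from the far field.
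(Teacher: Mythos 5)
Your construction of the solution and your treatment of part (i) are essentially fine. The existence argument (potential formulation, truncated domains, variational minimization, uniform estimates, diagonal limit, Bernoulli's law for $\bar p$) is the same scheme the paper uses, up to the cosmetic difference that the paper carries the flux by the Neumann condition $\partial_3\bar\varphi_L=m_0/|\Sigma_L|$ on $\Sigma_L$ rather than by a lifting $h$. For (i) the paper proves exponential decay by an energy estimate with the piecewise-exponential weight $\zeta$ followed by Nash--Moser, whereas you expand in Neumann eigenfunctions of the disk; your route is a valid alternative in the perfectly cylindrical region and even pins down $\mathfrak d_1=\sqrt{\mu_1}$.

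Part (ii), however, has a genuine gap. The hypothesis \eqref{Iboundary_algebratic_rateii} controls only the pure $x_3$-derivatives $\partial_3^k(f_1-1)$, $k=0,1,2$; it gives no decay of $\partial_\tau f_1$ or of the mixed derivative $\partial_\tau\partial_3 f_1$. Your flattening map $|y'|=r/f_1(\tau,x_3)$ has Jacobian entries proportional to $\partial_\tau f_1$, and the source $\Div(Ae_3)$ and the transformed coefficients contain $\partial_\tau f_1$ and $\partial_\tau\partial_3 f_1$; hence the claims $|A-I|\le Cx_3^{-\mathfrak a_1}$ and ``both inhomogeneities decay at a rate governed by $\mathfrak a_1$'' do not follow from \eqref{Iboundary_algebratic_rateii}. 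Interpolating $\|f_1-1\|_{L^\infty}=O(x_3^{-\mathfrak a_1})$ against the uniform $C^{2,\alpha}$ bound of the boundary only yields $\partial_\tau f_1=O(x_3^{-\mathfrak a_1/2})$, so the weighted estimate you describe would at best give a degraded rate, not \eqref{Ibdecay}. (A smaller imprecision: $\tilde\psi$ does not carry zero flux; the cross-sectional flux deficiency is $q_0\big(|B_1(0)|-|\Sigma_{x_3}|\big)=O(x_3^{-\mathfrak a_1})$ and has to be estimated, as the paper does in \eqref{crosssectionestimate}.) The paper avoids tangential derivatives entirely: it stays in the physical domain, sets $\bar{\mathcal U}=\bar\varphi-q_0x_3$, and uses only two facts --- the Neumann data $-q_0n_3$ with $|n_3|\le C|\partial_3 f_1|\le Cx_3^{-\mathfrak a_1-1}$, and the flux identity $\int_{\Sigma_{x_3}}\partial_3\bar{\mathcal U}\,dx'=q_0\big(|B_1(0)|-|\Sigma_{x_3}|\big)=O(x_3^{-\mathfrak a_1})$ --- which it feeds into the energy estimate with the weight $\zeta$ on expanding slabs, an iteration over slabs, and Nash--Moser for the pointwise bound. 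To repair your route you would either have to add decay assumptions on the $\tau$-derivatives of $f_1-1$ (strengthening the theorem's hypothesis) or reorganize the weighted estimate so that only $f_1-1$ and $\partial_3f_1$ enter, which essentially reproduces the paper's argument.
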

Because of the irrotational condition \eqref{irrotational}, one has
\begin{equation}
\nabla\bar\varphi={\bf\bar u}.
\end{equation}
The straightforward computations yield the Bernoulli's law, $i.e.$,
\begin{equation}
\bar p=\phi-\frac{|\nabla\bar\varphi|^2}{2}+C,
\end{equation} where $C$ is a constant. Thus, the problem \eqref{problemI1} is converted to the
following problem
\begin{equation}\label{problemI2}
\begin{cases}
\triangle\bar\varphi=0,&\text{ in }\Omega,\\
\frac{\partial\bar\varphi}{\partial{\bf n}}=0, &\text{ on } \partial\Omega,\\
\int_{\Sigma_t} \nabla\bar\varphi\cdot {\bf{l}}ds=m_0.
\end{cases}
\end{equation}

Now, we turn to the compressible case. Suppose that $(\rho^\epsilon, {\bf u}^\epsilon)$ satisfies
\begin{equation}\label{Eulereq1}
\begin{cases}
\Div(\rho^\epsilon  {\bf u^\epsilon}) = 0, &\text{ in $\Omega$},  \\
\Div(\rho^\epsilon {\bf u^\epsilon}\otimes {\bf u^\epsilon})+\nabla p^\epsilon=\rho^\epsilon\nabla\phi,&\text{ in $\Omega$},\\
{\bf  u^\epsilon}\cdot{\bf n}=0,  &\text{ on } \partial\Omega,\\
\int_{\Sigma_t}\rho^\epsilon {\bf u^\epsilon}\cdot {\bf{l}}ds=m_0,
\end{cases}
\end{equation}
with $p^\epsilon$ satisfies
\begin{equation}\label{p_epsilon}
p^\epsilon=\frac{\tilde p(\rho)-\tilde p(1)}{\epsilon^2}.
\end{equation}
\begin{theorem}\label{Theorem low Mach limit}For compressible flows, suppose \eqref{consevative force} holds, for any $m_0>0$, there exists a constants $\epsilon_c$ such that if $0<\epsilon<\epsilon_c$ the problem \eqref{Eulereq1} admits a unique solution $({\bf u^\epsilon},\rho^\epsilon,p^\epsilon)\in \big(C^{\alpha}(\Omega) \big)^5$ for some $\alpha<1$ with $M<1$. Furthermore, as $\epsilon\rightarrow0$ one has
	\begin{equation}
	\rho^\epsilon=1+O(\epsilon^2),\quad {\bf u^{\epsilon}}={\bf\bar u}+O(\epsilon^2),\quad  p^\epsilon=\bar p+O(\epsilon^2) \quad\text{and}\quad M=O{(\epsilon)},
	\end{equation}
	where $(\bar{\bf{u}},\bar p)$ solves the problem \eqref{problemI1} in Theorem \ref{incompressible case}.
	\end{theorem}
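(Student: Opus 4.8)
The plan is to reduce \eqref{Eulereq1} to a single quasilinear elliptic equation for the velocity potential, to solve it for small $\epsilon$ via a subsonic cut-off together with the variational machinery already used for the incompressible problem \eqref{problemI2}, and then to compare the compressible and incompressible potentials; the recurring requirement is that every estimate be \emph{uniform in} $\epsilon$. \textbf{Step 1 (potential reduction).} Since $\Omega=\tilde\Omega\setminus\Omega'$ is simply connected in $\mathbb R^3$ (removing a ball-like obstacle from the cylinder-like nozzle does not create a handle in three dimensions), irrotationality gives a potential $\varphi^\epsilon$ with $\nabla\varphi^\epsilon={\bf u}^\epsilon$, and the momentum equation in \eqref{Eulereq1} integrates to the Bernoulli relation
\begin{equation*}
\frac{|\nabla\varphi^\epsilon|^2}{2}+\frac{1}{\epsilon^2}\int_1^{\rho^\epsilon}\frac{\tilde p'(s)}{s}\,ds=\phi+\mathfrak C^\epsilon ,
\end{equation*}
where the constant $\mathfrak C^\epsilon$ is fixed by the mass flux $m_0$ (equivalently by the far-field state) and is shown to be bounded uniformly as $\epsilon\to 0$. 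By \eqref{pressurecondtion} this can be inverted, in the subsonic range, as $\rho^\epsilon=\varrho_\epsilon\!\left(\tfrac12|\nabla\varphi^\epsilon|^2-\phi\right)$ with $\varrho_\epsilon$ smooth and decreasing; substituting into the continuity equation, $\varphi^\epsilon$ solves
\begin{equation*}
\Div\!\left(\varrho_\epsilon\!\left(\tfrac12|\nabla\varphi^\epsilon|^2-\phi\right)\nabla\varphi^\epsilon\right)=0 \ \text{ in }\Omega,\qquad \frac{\partial\varphi^\epsilon}{\partial{\bf n}}=0 \ \text{ on }\partial\Omega,\qquad \int_{\Sigma_t}\rho^\epsilon\nabla\varphi^\epsilon\cdot{\bf l}\,ds=m_0 ,
\end{equation*}
which is elliptic exactly where $M<1$ and degenerates as $M\to1$.

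\textbf{Step 2 (existence, uniqueness, uniform subsonic bound).} One replaces $\varrho_\epsilon$ by a modified density $\varrho_\epsilon^{*}$ that agrees with $\varrho_\epsilon$ for $M\le 1-\delta$ and is extended so that the resulting equation is uniformly elliptic, and solves this truncated Neumann--flux problem exactly as \eqref{problemI2} is solved in Theorem \ref{incompressible case} --- by the variational formulation on the nozzle plus the far-field analysis --- obtaining a solution $\varphi^\epsilon$ with a bound on $\|\nabla\varphi^\epsilon\|_{C^{\alpha}(\Omega)}$ that is independent of $\epsilon$ for $\epsilon$ small, since the truncated operator converges to the Laplacian and $\varrho_\epsilon\to 1$ as $\epsilon\to0$. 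A maximum-principle argument of De Giorgi--Nash--Moser type, using the geometry of the obstacle and the flux constraint, then shows that for $0<\epsilon<\epsilon_c$ with $\epsilon_c=\epsilon_c(m_0,\Omega)$ one actually has $M\le 1-\delta$ throughout $\Omega$, so the cut-off is inactive and $\varphi^\epsilon$ solves the genuine equation; Schauder estimates then upgrade the regularity so that $({\bf u}^\epsilon,\rho^\epsilon,p^\epsilon)\in(C^{\alpha}(\Omega))^5$. Uniqueness follows by subtracting two subsonic solutions: the difference of their potentials satisfies a linear, uniformly elliptic equation with homogeneous Neumann data and zero flux, and an energy estimate on the truncated nozzles $\Omega\cap\{|x_3|<R\}$ together with the far-field decay forces the difference of the velocities to vanish.

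\textbf{Step 3 (low Mach number limit and the rate $O(\epsilon^2)$).} From the Bernoulli relation, $\int_1^{\rho^\epsilon}\tilde p'(s)s^{-1}\,ds=\epsilon^2\big(\phi+\mathfrak C^\epsilon-\tfrac12|\nabla\varphi^\epsilon|^2\big)$; the right-hand side is $O(\epsilon^2)$ by the uniform bounds of Step 2, and since $\rho\mapsto\int_1^{\rho}\tilde p'(s)s^{-1}\,ds$ is a diffeomorphism near $\rho=1$ one gets $\rho^\epsilon=1+O(\epsilon^2)$ in $C^{\alpha}(\Omega)$, whence $M=\epsilon|\nabla\varphi^\epsilon|/\sqrt{\tilde p'(\rho^\epsilon)}=O(\epsilon)$. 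For the velocity, put $\psi^\epsilon=\varphi^\epsilon-\bar\varphi$; subtracting $\Div(\rho^\epsilon\nabla\varphi^\epsilon)=0$ and $\triangle\bar\varphi=0$ yields the linear, uniformly elliptic problem
\begin{equation*}
\Div(\rho^\epsilon\nabla\psi^\epsilon)=-\Div\!\big((\rho^\epsilon-1)\nabla\bar\varphi\big) \ \text{ in }\Omega,\qquad \frac{\partial\psi^\epsilon}{\partial{\bf n}}=0 \ \text{ on }\partial\Omega,\qquad \int_{\Sigma_t}\rho^\epsilon\nabla\psi^\epsilon\cdot{\bf l}\,ds=-\int_{\Sigma_t}(\rho^\epsilon-1)\nabla\bar\varphi\cdot{\bf l}\,ds ,
\end{equation*}
whose forcing and modified flux are $O(\epsilon^2)$: indeed $\triangle\bar\varphi=0$ reduces the forcing to $-\nabla(\rho^\epsilon-1)\cdot\nabla\bar\varphi$, and differentiating the Bernoulli relation gives $\nabla(\rho^\epsilon-1)=O(\epsilon^2)$ in $L^q$, using $\nabla\phi\in L^q(\Omega)$ and the bounds on $\nabla\varphi^\epsilon$ and $D^2\varphi^\epsilon$. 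Applying the elliptic estimates of Step 2 to this linear problem --- an energy estimate on the truncated nozzles plus the far-field behaviour of $\varphi^\epsilon$ and $\bar\varphi$ --- gives $\|\nabla\psi^\epsilon\|_{C^{\alpha}(\Omega)}=O(\epsilon^2)$, i.e. ${\bf u}^\epsilon=\bar{\bf u}+O(\epsilon^2)$. Finally, comparing the two Bernoulli laws gives $p^\epsilon=\mathfrak C^\epsilon+\phi-\tfrac12|\nabla\varphi^\epsilon|^2+O(\epsilon^2)$ while $\bar p=C+\phi-\tfrac12|\nabla\bar\varphi|^2$; since $\tfrac12\big(|\nabla\varphi^\epsilon|^2-|\nabla\bar\varphi|^2\big)=\tfrac12\nabla\psi^\epsilon\cdot(\nabla\varphi^\epsilon+\nabla\bar\varphi)=O(\epsilon^2)$ and $\mathfrak C^\epsilon\to C$ (both constants being fixed by $m_0$ and the common far-field normalization), we conclude $p^\epsilon=\bar p+O(\epsilon^2)$.

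\textbf{Main obstacle.} The heart of the matter is Step 2: establishing the \emph{uniform-in-$\epsilon$} subsonic bound --- an upper bound on the Mach number strictly below $1$ depending only on $m_0$ and the nozzle, not on $\epsilon$ --- on the unbounded punctured domain $\Omega$, together with the accompanying uniform $C^{2,\alpha}$ estimates. This is where the geometry of the obstacle $\Omega'$, the maximum principle, and the far-field analysis (which must accommodate forces that need not converge at far fields) have to be combined carefully. Once these are in hand, the passage $\epsilon\to0$ and the quadratic convergence rate in Step 3 are comparatively routine perturbation arguments around the incompressible solution of Theorem \ref{incompressible case}.
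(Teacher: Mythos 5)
Your Steps 1 and 2 follow the paper's general scheme (potential reduction, subsonic truncation of the density, variational solution on truncated nozzles, Nash--Moser for uniform H\"older gradient bounds, uniqueness by an energy argument), but your Step 3 departs from the paper in a way that leaves a genuine gap. The paper never performs an a posteriori linearization: it builds the quadratic rate into the construction itself, writing the minimization for $\mathcal J^{\epsilon}(\tilde\psi)=\mathcal L^{\epsilon}(\bar\varphi_L+\epsilon^2\tilde\psi)$ so that the compressible potential is obtained in the form \eqref{tilde_varphi}, $\varphi^\epsilon_L=\bar\varphi_L+\epsilon^2\tilde\varphi^\epsilon_L$, with the correction $\tilde\varphi^\epsilon_L$ bounded uniformly in $L$ and $\epsilon$ by Lemma \ref{lemma_local estimate} and \eqref{regularity}. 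Existence, the inactivity of the subsonic cut-off, $M=O(\epsilon)$, and ${\bf u}^\epsilon=\bar{\bf u}+O(\epsilon^2)$ then all follow at once (note also that the cut-off removal needs no maximum principle: once $|\nabla\varphi^\epsilon|\le \max|\nabla\bar\varphi|+C\epsilon^2$, the scaling $c\sim\epsilon^{-1}$ of the sound speed gives $M=O(\epsilon)$ directly). Your route instead constructs $\varphi^\epsilon$ independently and then compares with $\bar\varphi$ through the linear problem $\Div(\rho^\epsilon\nabla\psi^\epsilon)=-\Div\big((\rho^\epsilon-1)\nabla\bar\varphi\big)$, and you dispose of it by ``applying the elliptic estimates of Step 2.'' That is the missing step: on the unbounded domain $\Omega$ the gradient $\nabla\psi^\epsilon$ is not globally square integrable, the source $(\rho^\epsilon-1)\nabla\bar\varphi$ is $O(\epsilon^2)$ in $L^\infty$ but does \emph{not} decay as $|x_3|\to\infty$ (no flattening of the nozzle or decay of $\phi$ is assumed in this theorem), and there is no ``far-field behaviour'' to invoke. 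To convert $O(\epsilon^2)$ data into an $O(\epsilon^2)$ bound for $\nabla\psi^\epsilon$ on unit slabs you would have to redo a weighted, localized energy argument of the type the paper uses with the cut-off \eqref{zeta} in Sections \ref{existence and uniqueness} and \ref{secrate}, now with a nonzero, non-decaying right-hand side and a perturbed flux condition, followed by local De Giorgi--Nash--Moser to upgrade to $L^\infty$. This is the heart of the quantitative statement in your route, yet your closing paragraph declares Step 3 ``comparatively routine,'' which misplaces the difficulty.

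Two further points. First, your estimate of the forcing via $\nabla(\rho^\epsilon-1)=O(\epsilon^2)$ invokes ``bounds on $D^2\varphi^\epsilon$'' that are not available uniformly: under \eqref{consevative force} the potential $\phi$ is only $W^{1,q}$, $q>3$, so the coefficients are merely $C^{\alpha}$ and the uniform regularity one gets is $C^{1,\alpha}$ for $\varphi^\epsilon$ (as in \eqref{regularity}), not $C^{2,\alpha}$; this particular defect is repairable by keeping the source in divergence form and estimating only $\|(\rho^\epsilon-1)\nabla\bar\varphi\|_{L^\infty\cap C^\alpha}=O(\epsilon^2)$, but as written the argument uses unavailable bounds. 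Second, in your uniqueness argument the appeal to ``far-field decay'' is again unwarranted in this generality; the paper's uniqueness proof needs only the boundedness of the gradients, beating the linear-in-$T$ growth of the local energy by the exponential weight $e^{\beta\mathfrak h}$ before letting $T\to\infty$. If you either adopt the paper's ansatz $\varphi^\epsilon=\bar\varphi+\epsilon^2\tilde\varphi^\epsilon$ at the variational level, or supply the weighted energy estimate for your linear comparison problem in full, the remainder of your outline (density, Mach number, and pressure rates from Bernoulli's law) is sound.
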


Based on the existence of subsonic solution to the problem \eqref{Eulereq1}, if the boundary of nozzle tends to be flat at far fields, we can also obtain the convergence rates of velocity fields. Let $\mathcal C$ be the perfect cylinder defined in \eqref{cylinder}.
According to \cite{MR3638912}, for any $m_0>0$,
there exists a $\hat\epsilon_c>0$ such that for any $\epsilon<\hat\epsilon_c$, as long as the force $\phi$ satisfies \eqref{consevative force},  there exists a unique uniformly subsonic solution $\bf{u}_{*}$ satisfying
\begin{equation}\label{Eulereq2}
\begin{cases}
\Div(\rho_{*}  {\bf u_{*}}) = 0, &\text{ in $\mathcal C$},  \\
\Div(\rho_{*} {\bf u_{*}}\otimes {\bf u_{*}})+\nabla p_{*}=\rho_*\nabla\phi,&\text{ in $\mathcal C$},\\
{\bf  u_{*}}\cdot{\bf n}=0,  &\text{ on } \partial\mathcal C,\\
\int_{B_1(0)}\rho_{*} {\bf u_{*}}\cdot {\bf{l}}ds=m_0.
\end{cases}
\end{equation}
It is worth pointing that if $\phi$ is a function independent of $x_3$, $i.e$., $\phi=\bar\phi(x_1,x_2)$ and satisfies  \eqref{consevative force}, the straightforward computations yield that  ${
	\bf {\bar u_*}}= (0,0,\bar q)$ is the corresponding solution to $\eqref{Eulereq2}$,
where
$\bar q$ is a constant satisfying
\begin{equation}
\int_{B_1(0)}\rho^\epsilon(\bar q^2,\bar\phi)\bar qdx'=m_0\quad \text{and}\quad \bar q<\frac{\sqrt{\tilde p'(\rho^\epsilon)}}{\epsilon}.
\end{equation}
\begin{theorem}\label{Theorem convergence rate}
	Let $K$ be a large positive number. For any fixed $0<\epsilon<\min\{\epsilon_c,\hat\epsilon_c\}$, ${\bf u}^\epsilon$ is the subsonic solution of \eqref{Eulereq1} and ${\bf u}_{*}$ is the solution of \eqref{Eulereq2}. \\
	(i) Suppose that the nozzle is flat, $i.e.$,
	$\Omega$ satisfies \eqref{Icylindercase}.  If $\phi$ satisfies \eqref{consevative force}, there exists a positive constant $\mathfrak d$ such that
	\begin{equation}\label{cylinder boundary}
	|{\bf u}^\epsilon-{\bf u}_{*}|\leq Ce^{-\mathfrak d x_3} \quad\text{for $x_3>K$},
	\end{equation}
	where $C$ is a constant independent of $x_3$.\\
	(ii) If the boundary satisfies \eqref{Iboundary_algebratic_rateii}
	with $\mathfrak a_1>1$, then the velocity satisfies
	\begin{equation}
	|{\bf u}^\epsilon-{\bf u}_{*}|\leq Cx_3^{-\mathfrak a_1+1},
	\end{equation}
	where $C$ is a constant independent of $x_3$.\\
	(iii)For given $\mathfrak a_1>0$ and $\mathfrak b_1>0$, suppose that the boundary of the nozzle satisfies \eqref{Iboundary_algebratic_rateii}.
	if, in addition, the conservative force $\phi$ satisfies
	\begin{equation}\label{force algebratic rate}
	\sum\limits_{k=0}^{2}\big|x_3^k\partial_{3}^k(\phi-\bar\phi)\big|\leq{C}{x_3^\mathfrak {-b_1}}\quad\text{for $x_3>K$},
	\end{equation}
	then the velocity satisfies
	\begin{equation}\label{Thm velocity}
	|{\bf u}^\epsilon-{\bf \bar u}_{*}|\leq Cx_3^{-\mathfrak b},
	\end{equation}
	where $\mathfrak b=\min\{\mathfrak a_1,\mathfrak b_1\}$ and $C$ is a constant independent of $x_3$.
\end{theorem}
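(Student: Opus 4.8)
Following the strategy of \cite{MR3196988,MaLei}, the plan is to reduce each of the three estimates to a far-field decay statement for a single second-order linear elliptic equation on a (half-)cylinder, obtained by subtracting the potential equations of ${\bf u}^\epsilon$ and of the reference flow. Since the flows are irrotational, write ${\bf u}^\epsilon=\nabla\varphi^\epsilon$ and, according to the case, ${\bf u}_*=\nabla\varphi_*$ on $\mathcal C$ or $\bar{\bf u}_*=\nabla(\bar q x_3)$; recovering the density from the potential through Bernoulli's law, each potential solves a quasilinear equation $\Div\big(\rho(|\nabla\varphi|^2,\phi)\nabla\varphi\big)=0$ with the conormal condition $\partial_{\bf n}\varphi=0$ and prescribed mass flux $m_0$ ($\phi$ replaced by $\bar\phi$ for $\bar q x_3$). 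For $\epsilon<\min\{\epsilon_c,\hat\epsilon_c\}$ the solution ${\bf u}_*$ on $\mathcal C$ is the one provided by \cite{MR3638912}, and the interior Schauder theory together with Theorem~\ref{Theorem low Mach limit} and \cite{MR3638912} yields uniform subsonicity and uniform local $C^{1,\alpha}$ bounds for both flows in $\{x_3>K\}$; in particular the linearized operators below are uniformly elliptic there.

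In case (i), for $x_3>K$ the domain coincides with the flat cylinder $B_1(0)\times(K,\infty)$ and the body force is the same for both flows, so $w:=\varphi^\epsilon-\varphi_*$ solves there a \emph{homogeneous} uniformly elliptic equation $\Div(\mathbb A(x)\nabla w)=0$ --- with $\mathbb A$ obtained by the mean value theorem along the segment joining $\nabla\varphi^\epsilon$ and $\nabla\varphi_*$ --- with $\partial_{\bf n}w=0$ on the lateral boundary and vanishing weighted flux $\int_{B_1(0)}\big(\rho^\epsilon{\bf u}^\epsilon-\rho_*{\bf u}_*\big)\cdot e_3\,dx'\equiv 0$ across every section. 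One then runs a Saint-Venant (Phragm\'en--Lindel\"of) argument: for $E(t)=\int_{B_1(0)\times(t,\infty)}|\nabla w|^2\,dx$ the Caccioppoli inequality combined with the Poincar\'e inequality on the cross-section, applied to $w$ minus its cross-sectional average (the average being controlled via the zero-flux identity), gives $E'(t)\le-2\mathfrak d\,E(t)$ with $\mathfrak d$ determined by the first nonzero Neumann eigenvalue of the section, hence $E(t)\le Ce^{-2\mathfrak d t}$; local $W^{2,p}$ and Schauder estimates on unit slabs then upgrade this to the pointwise bound \eqref{cylinder boundary}. A preliminary step, making $E(\cdot)$ finite and the whole argument non-circular, is a crude ($o(1)$) decay of $\nabla w$, which I would obtain by a translation/compactness argument together with the uniqueness in \cite{MR3638912}.

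For (ii) and (iii) the reduction is to a linear elliptic equation for $w$ carrying source terms. In (ii), comparing ${\bf u}^\epsilon$ with ${\bf u}_*$, I would first straighten the downstream part of $\Omega$ by a near-identity $C^{2,\alpha}$ diffeomorphism preserving the slices $\{x_3=\mathrm{const}\}$, whose distance from the identity (in the relevant norm) is $O(x_3^{-\mathfrak a_1})$ by \eqref{Iboundary_algebratic_rateii}; then $w$ solves $\Div(\mathbb A\nabla w)=\Div{\bf G}$ on $B_1(0)\times(K,\infty)$ with $\partial_{\bf n}w=({\bf G}\cdot{\bf n})+h$ on the lateral boundary, where $|{\bf G}|+|h|\le Cx_3^{-\mathfrak a_1}$ and the weighted flux of $w$ across sections is only of order $x_3^{-\mathfrak a_1}$ (the slices of $\Omega$ have area $|B_1(0)|\,(1+O(x_3^{-\mathfrak a_1}))$). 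Because ${\bf u}_*$ depends on $x_3$ the geometric error enters undifferentiated, the flux identity gives only $|\partial_3\bar w|\le Cx_3^{-\mathfrak a_1}$ and hence $|w-\mathrm{const}|\le Cx_3^{1-\mathfrak a_1}$, and feeding this into $\int{\bf G}\cdot\nabla w=-\int(\Div{\bf G})\,w+\cdots$ with $|\Div{\bf G}|\le Cx_3^{-\mathfrak a_1}$ costs exactly one power. In (iii), comparing ${\bf u}^\epsilon$ directly with the constant $(0,0,\bar q)$ in $\Omega$, the nozzle wall enters the equation for $w$ only through the conormal data $\partial_{\bf n}w=-\bar q\,n_3$ with $|n_3|\le C|\partial_3 f_1|\le Cx_3^{-\mathfrak a_1-1}$, and the force only through an interior source $g$ with $|g|\le Cx_3^{-\mathfrak b_1-1}$ coming from $\partial_3(\phi-\bar\phi)$ --- both one order better because they appear as derivatives of, respectively, the wall position and the force --- while the flux identity gives $|\partial_3\bar w|\le Cx_3^{-\min\{\mathfrak a_1,\mathfrak b_1\}}$, so there is no loss beyond $\min\{\mathfrak a_1,\mathfrak b_1\}$. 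In either case one closes with an appropriately $x_3$-weighted energy estimate --- multiply by $w\chi(x_3)^2$, split $w=\bar w(x_3)+w^\perp$, estimate $w^\perp$ by the cross-sectional Poincar\'e inequality and $\bar w$ by the flux identity --- obtaining $E'(t)\le-2cE(t)+Ct^{-2\kappa}$ with $\kappa=\mathfrak a_1-1$ in (ii) and $\kappa=\min\{\mathfrak a_1,\mathfrak b_1\}$ in (iii), hence $E(t)\le Ct^{-2\kappa}$, and then $|\nabla w|\le Ct^{-\kappa}$ on unit slabs by local elliptic estimates; transforming back by the near-identity in case (ii) costs only $1+O(x_3^{-\mathfrak a_1})$, and in (iii) one also invokes $|{\bf u}_*-(0,0,\bar q)|\le Cx_3^{-\mathfrak b_1}$ on $\mathcal C$ from the same analysis.

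The hard part will be the weighted energy estimate with the stated exponent: controlling the zero (cross-sectional average) mode of $w$ through the mass-flux identity, since the Poincar\'e/spectral-gap mechanism only controls the mean-zero part, and the bookkeeping that converts the geometric error (size $x_3^{-\mathfrak a_1}$) and the force error (size $x_3^{-\mathfrak b_1-1}$) into the stated powers --- in particular the one-power loss in (ii) versus none in (iii), and the choice of weight needed when $\mathfrak a_1$ or $\mathfrak b_1$ is small, where the unweighted energy can be infinite. A secondary point, to be dispatched first, is the a priori finiteness together with a weak non-sharp decay of $\int_{\{x_3>K\}}|\nabla w|^2$, needed before the Saint-Venant machinery can be started.
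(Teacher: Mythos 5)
Your outline follows essentially the same route as the paper: write the flows via potentials, subtract the reference solution, linearize along the segment to get a uniformly elliptic divergence-form equation, run a Saint-Venant/weighted energy estimate in which the cross-sectional mean is controlled through the mass-flux identity and the mean-zero part through the Poincar\'e inequality on sections, and then upgrade $L^2$ slab decay to pointwise decay by local elliptic estimates (the paper does this last step by a Nash--Moser iteration on the derivatives after flattening the boundary and subtracting a corrector; your Schauder/$W^{2,p}$ route with $C^\alpha$ coefficients is a legitimate substitute). Two implementation points are worth noting. First, in case (i) the paper does not need your preliminary finiteness/$o(1)$ decay step: instead of the differential inequality $E'(t)\le -2\mathfrak d E(t)$ for the tail energy, it multiplies by the piecewise-exponential weight $\zeta$ of \eqref{zeta} on finite slabs and uses only the boundedness of the gradient (so the energy on $\Omega(-2T,2T)$ grows at most linearly), which the exponential factor then absorbs; this avoids the compactness/uniqueness detour you propose, and the same slab-iteration device is what handles the algebraic cases. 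Second, in case (iii) your claim that the force enters only through $\partial_3(\phi-\bar\phi)$ with size $x_3^{-\mathfrak b_1-1}$ (``one order better because it appears as a derivative'') is not justified: undifferentiating the test function produces transverse derivatives of $\phi-\bar\phi$, which hypothesis \eqref{force algebratic rate} does not control. The fix is simply to keep the force contribution in divergence form, as the paper does with its term $b_i$ of size $x_3^{-\mathfrak b_1}$ paired against $\nabla$ of the test function; this still yields the exponent $\mathfrak b=\min\{\mathfrak a_1,\mathfrak b_1\}$, so your final rates in (i)--(iii), and your accounting of the one-order loss in (ii) versus none in (iii) coming from the boundary terms, agree with the paper.
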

As long as the flows are irrotational, there also exists a potential $\varphi^{\epsilon}$ such that
\begin{equation}\label{u_epsilon}
\nabla\varphi^{\epsilon}={\bf u}^\epsilon.
\end{equation}
Similarly, one has the following Bernoulli's law
\begin{equation}\label{bernoulislaw}
\frac{|\nabla\varphi^\epsilon|^2}{2}+\int_{1}^{\rho^\epsilon}\frac{(p^\epsilon(s))'}{s}ds=\phi+C_1,
\end{equation}
where $C_1$ is a constant.
Let $\tilde h$ be the enthalpy function satisfying
\begin{equation}
\tilde h'(\rho)=\frac{\tilde p'(\rho)}{\rho}.
\end{equation}
Without loss of generality, assuming $C_1=0$, then  \eqref{bernoulislaw} becomes
\begin{equation}\label{bernouli}
\frac{|\nabla\varphi^\epsilon|^2}{2}+h^\epsilon(\rho^\epsilon)-h^\epsilon(1)=\phi,
\end{equation}
where $h^\epsilon(\rho)=\epsilon^{-2}\tilde{h}(\rho)$.
The straightforward computations yield that $\tilde h(\rho)$ is a strictly increasing function with respect to $\rho$, so is $h^\epsilon(\rho)$.

Now, one may introduce the critical speed for the flows. For each fixed $0<\theta\leq1$, one can follow \cite{2019arXiv190101048L} to define $q_{\theta}^\epsilon$ such that ${\bf u^\epsilon}<q_{\theta}^{\epsilon}$ if and only if $M<\theta$. Specially, for the polytrotic case, define
\begin{equation}
\mu^2=\frac{(\gamma-1)\theta^2}{2+(\gamma-1)\theta^2}\quad\text{and}\quad q_\theta^\epsilon=\mu\sqrt{2\big(\phi+h(1)\big)},
\end{equation}
then  the Bernoulli's function \eqref{bernouli} can be written in the following form
\begin{equation}
\mu^2|\nabla\varphi^\epsilon|^2-(1-\mu^2)\theta^2c^2=(q_\theta^\epsilon)^2.
\end{equation}
This implies that
\begin{equation}
|\nabla\varphi^\epsilon|^2-(q_\theta^\epsilon)^2=(1-\mu^2)(|\nabla\varphi^\epsilon|^2-\theta^2c^2).
\end{equation}
In particular, when $\theta=1$,  $q_{cr}^\epsilon:=q_1^\epsilon$ is called the critical speed.
Obviously, the critical speed $q_{cr}^\epsilon\sim O(\epsilon^{-1})$.
It easy to see that  $|{\bf{u}^\epsilon}|<q_{cr}^\epsilon(\phi)$ holds if and only if the flow is subsonic, $i.e.$, $M<1$. Similarly, when pressure satisfies \eqref{pressurecondtion}, for each fixed positive $\theta<1$,  $q^{\epsilon}_\theta(\phi)$ satisfies $M<\theta$ holds if and only if $|{\bf u}^\epsilon|<q_{\theta}^{\epsilon}(\phi)$. And, $q_\theta^\epsilon(\phi)$ is increasing with respect to $\theta\in(0,1)$. In addition, $\epsilon q_\theta^\epsilon(\phi)$ and $\epsilon q_{cr}^\epsilon(\phi)$ are bounded with respect to $\epsilon$.

Because of \eqref{bernouli}, the densty $\rho^\epsilon$ can be represented as follows
\begin{equation}\label{rho_epsilon}
\rho^\epsilon(|\nabla\varphi^\epsilon|^2,\phi)=\tilde h^{-1}\bigg(\frac{\epsilon^2(2\phi-|\nabla\varphi^\epsilon|^2)}{2}+\tilde h(1)\bigg).
\end{equation}
Thus for each fixed $\epsilon$,  \eqref{Eulereq1} is equivalent to the following problem
\begin{equation}\label{problem C2}
\begin{cases}
\Div\big(\rho^\epsilon(|\nabla\varphi^\epsilon|^2,\phi)\nabla\varphi^\epsilon\big)=0,&\text{ in }\Omega,\\
\frac{\partial\varphi^\epsilon}{\partial{\bf n}}=0, &\text{ on } \partial\Omega,\\
\int_{\Sigma_t}\rho^\epsilon(|\nabla\varphi^\epsilon|^2,\phi) \nabla\varphi^\epsilon\cdot {\bf{l}}ds=m_0.
\end{cases}
\end{equation}

Furthermore, there exists a potential $\varphi_*$ such that the problem \eqref{Eulereq2} is equivalent to the following
\begin{equation}\label{cylinderproblem}
\begin{cases}
\Div\big(\rho^\epsilon(|\nabla\varphi_*|^2,\phi)\nabla\varphi_*\big)=0,&\text{ in }\mathcal C,\\
\frac{\partial\varphi_*}{\partial{\bf n}}=0, &\text{ on } \partial\mathcal C,\\
\int_{B_1(0)}\rho^\epsilon(|\nabla\varphi_*|^2,\phi) \nabla\varphi_*\cdot {\bf{l}}ds=m_0,&\text{ for } x_3\in\mathbb R.
\end{cases}
\end{equation}
Therefore, if $\phi=\bar\phi(x_1,x_2)$ and satisfies  \eqref{consevative force}, then the straightforward computations yield that $\bar\varphi_*= \bar qx_3$ satisfies
\begin{equation}\label{limitcylinderproblem}
\begin{cases}
\Div\big(\rho^\epsilon(|\nabla\bar\varphi_*|^2,\bar\phi)\nabla\bar\varphi_*\big)=0,&\text{ in }\mathcal C,\\
\frac{\partial\bar\varphi_*}{\partial{\bf n}}=0, &\text{ on } \partial\mathcal C\\
\int_{B_1(0)}\rho^\epsilon(|\nabla\bar\varphi_*|^2,\bar\phi) \nabla\bar\varphi_*\cdot {\bf{l}}ds=m_0,&\text{ for } x_3\in\mathbb R.
\end{cases}
\end{equation}

In the rest of this paper, we mainly consider the problems
\eqref{problem C2}--\eqref{limitcylinderproblem}. There are few remarks in order.
\begin{remark}
	It easy to see that the gravity force $\phi=gx_i$ $(i=1,2)$  satisfies \eqref{consevative force}.
	Also, it is easy to check the gravitational potential generated by the solid domain $\Omega^c$ (which is the complement of the fluid domain $\Omega$), i.e.,
	$$
	\phi(x)=\int_{\Omega'}\frac{\rho_s(y)}{|x-y|} d y
	$$
	satisfies the conditions  \eqref{consevative force}, where $x\in\Omega$ and $\rho_s\in L^1(\Omega')$ means the density distribution in $\Omega'$ is of finite mass. Similarly,  $\phi$ can also be the electric field.
\end{remark}

\begin{remark}
By \eqref{consevative force} and the Gagliardo-Nirenberg interpolation inequality, one has $\phi\in W^{1,q}(\Omega)$ for  $q>6$. Hence $\phi$ is an $L^{\infty}$ function by the Morrey's  inequality without extra condition.
\end{remark}

\begin{remark}For incompressible flows with external forces, the convergence rates \eqref{Icylinder boundary} and \eqref{Ibdecay}  are similar to the compressible Euler flows in \cite{MaLei} without the force. It follows from the convergence rates obtained in Theorem \ref{incompressible case} that the forces do not affect the convergence rates of velocity in the incompressible flows. However,  the external force plays an important role in the convergence rates of the velocity field of the compressible flows in {Theorem \ref{Theorem convergence rate}}.
\end{remark}
\begin{remark} For the flows past a body, the order of the convergence rate of the velocity field at infinity is independent of the force \cite{GuandWang}, while the convergence rates of velocity field for subsonic flows in infinitely long nozzles depend on the force $\phi$, which is described in Theorem \ref{Theorem convergence rate}. It is worth to point out that cases \textit{(i)} and \textit{(ii)} of Theorem \ref{Theorem convergence rate} show the flows have the precise far field asymptotic behavior even when  $\phi$ does not admit convergence rate at far fields. Furthermore, there is $1$ order loss of convergence rate in the polynomial case \textit{(ii)}, which is different from the exponential case \textit{(i)} in Theorem \ref{Theorem convergence rate}. Finally, case  \textit{(iii)} shows the convergence rates of the velocity field matches the slower one between the rate of boundary and force $\phi$.
\end{remark}
\begin{remark}
	The convergence rates of the flow velocity at far fields is independent of the obstacle in nozzle. This implies that Theorem \ref{Theorem convergence rate} can be applied to the flows through multidimensional nozzle studied in \cite{MR3638912}.
\end{remark}
Here we give the main ideas for the proof of the main results. Inspired by \cite{2019arXiv190101048L}, the existence of incompressible and compressible subsonic flows in the domain $\Omega$ is first established via the variational method. Then some uniform estimates are obtained which also implies that the order of low Mach number limit is $\epsilon^2$. The regularity of the solutions is improved since the corresponding subsonic potential equation is elliptic. As long as there is an  external force, both the boundary effect and the behavior of the external force at far fields have the strong influence on the convergence rates for the flows at far fields. Inspired by the delicate choice of weight function in\cite{MaLei, Ole1981ON}, $L^2-$norm of gradients of potential is obtained when  the boundary effect and the external force effect are combined together. Finally,  $L^\infty-$norm of the gradients of the potential established via the Nash-Moser iteration.

The rest of this paper is organized as follows. In Section \ref{approximate problems},  the approximate problems are introduced and  the variational method is applied to get the existence of weak solutions for both the incompressible and compressible flows. Moreover, some uniform estimates and the regularity of weak solution are obtained. The existence and uniqueness of incompressible-compressible different function are established in Sections \ref{existence and uniqueness}. In Section \ref{low mach number limit}, the proof of the low Mach number limit is given. In the last section, the convergence rates of velocity at far fields are established.

\section{Approximate Problems And Variational Approach}\label{approximate problems}
Since the domain is unbounded,  the truncation of the domain is used to to study problems \eqref{problemI2} and \eqref{problem C2}.

For any sufficiently large positive number $L$ and any set $U$, denote
\begin{equation*}
\Omega_L=\Omega\cap\big\{|x_3|<L\big\}\quad\text{and} \quad\dashint_Ufdx=\frac{1}{|U|}\int_Ufdx,\quad\end{equation*} where $f\in L^1(U)$. Later on, denote
\begin{equation*}\overline{S}=\inf\limits_{t\in\mathbb{R}}|{\Sigma}_{t}| \quad \text{and}\quad \underline{{S}}=\sup\limits_{t\in\mathbb{R}}|{\Sigma}_{t}|.
\end{equation*}

Define the space
\begin{equation}
\mathcal H_L=\{\varphi\in H^1({\Omega}_L):\varphi=0\text{ on } \Sigma_{-L}\}.
\end{equation}
One can directly check that $\mathcal H_L$ is a Hilbert space under $H^1$ norm.

Now we study the following problem about the incompressible flows in the truncated domain $\Omega_L$
\begin{equation}\label{problemI3}
\begin{cases}
\triangle\bar\varphi_L=0,\quad&\text{in $\Omega_L$},\\
\frac{\partial\bar\varphi_L}{\partial{\bf n}}=0,\quad&\text{on $\partial\Omega_L$},\\
\frac{\partial\bar\varphi_L}{\partial{x_3}}=\frac{m_0}{|\Sigma_L|},\quad&\text{on $\Sigma_L$},\\
\bar\varphi_L=0,\quad&\text{on $\Sigma_{-L}$}.
\end{cases}
\end{equation}
$\bar\varphi_L$ is called a weak solution of the problem \eqref{problemI3} in $\mathcal H_L$ if
\begin{equation}\label{weak_solutionI3}
\int_{{\Omega}_L}\nabla\bar\varphi_L\cdot\nabla\psi dx-\frac{m_0}{|\Sigma_{L}|}\int_{\Sigma_{L}}\psi dx'=0,\quad\text{for any $\psi\in \mathcal H_L.$}
\end{equation}

Define
\begin{equation}\label{functional}
\mathcal{I}_L(\psi)=\frac{1}{2}\int_{{\Omega}_L}|\nabla\psi|^2dx-\frac{m_0}{|\Sigma_{L}|}\int_{\Sigma_{L}}\psi dx',
\end{equation}
where $x'=(x_1,x_2)$.
The straightforward computations show that if $\bar\varphi_L$ is a minimizer of $\mathcal I_L$, $i.e.$,
\begin{equation}
\mathcal I_L(\bar\varphi_L)=\min_{\psi\in\mathcal H_L}\mathcal I_L(\psi),
\end{equation}
then $\bar\varphi_L$ must satisfy \eqref{weak_solutionI3}.

First,  the existence of minimizer of $\mathcal I_L$ and the basic estimate for the minimizer are obtained in the following lemma.
\begin{lemma}
	For any sufficiently large $L>0$, $\mathcal I_L(\psi)$ has a minimizer $\bar\varphi_L\in \mathcal H_L$. Moreover, the following estimate holds,
	\begin{equation}\label{key_estimae}
	\dashint_{{\Omega}_L}|\nabla\bar\varphi_L|^2dx\leq Cm_0^2,
	\end{equation}
	where $C$ is a constant independent of $L$.
\end{lemma}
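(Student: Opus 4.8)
The plan is to combine the direct method of the calculus of variations with a coercivity estimate on the bounded cylinder-type domain $\Omega_L$. First I would check that the functional $\mathcal I_L$ is well-defined and finite on $\mathcal H_L$: the quadratic term is obviously finite for $\varphi\in H^1(\Omega_L)$, and the boundary term $\frac{m_0}{|\Sigma_L|}\int_{\Sigma_L}\psi\,dx'$ is controlled via the trace theorem, $\left|\int_{\Sigma_L}\psi\,dx'\right|\le C\|\psi\|_{H^1(\Omega_L)}$. Next I would establish coercivity. The key point is the Poincar\'e-type inequality on $\mathcal H_L$: since $\varphi=0$ on $\Sigma_{-L}$, one has $\|\varphi\|_{L^2(\Omega_L)}\le C(L)\|\nabla\varphi\|_{L^2(\Omega_L)}$, so that $\|\nabla\varphi\|_{L^2(\Omega_L)}$ is an equivalent norm on $\mathcal H_L$. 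Feeding this and the trace estimate into $\mathcal I_L$ and using Young's inequality to absorb the linear term gives $\mathcal I_L(\psi)\ge \frac14\|\nabla\psi\|_{L^2(\Omega_L)}^2 - C$, hence $\mathcal I_L$ is bounded below and coercive. A minimizing sequence $\{\psi_k\}$ is then bounded in $\mathcal H_L$, so up to a subsequence $\psi_k\rightharpoonup\bar\varphi_L$ weakly in $H^1(\Omega_L)$ and strongly in $L^2(\Sigma_L)$ by compact trace embedding. Weak lower semicontinuity of the Dirichlet energy and convergence of the linear term give $\mathcal I_L(\bar\varphi_L)\le\liminf \mathcal I_L(\psi_k)=\inf_{\mathcal H_L}\mathcal I_L$, and since $\mathcal H_L$ is weakly closed, $\bar\varphi_L$ is a minimizer. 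Its Euler--Lagrange equation is exactly \eqref{weak_solutionI3}.

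For the uniform estimate \eqref{key_estimae}, the idea is to use the minimality against a well-chosen competitor whose energy is $O(m_0^2)$ uniformly in $L$. The natural choice is a fixed smooth function $\Psi$ on $\Omega$ that is divergence-free-flux-carrying: take $\Psi$ to depend only on $x_3$ near the two ends in a way that interpolates between $0$ on $\Sigma_{-L}$ and a profile carrying flux $m_0$ near $\Sigma_L$; concretely one may build $\Psi$ from a fixed vector field ${\bf V}$ on $\Omega$ with ${\bf V}\cdot{\bf n}=0$ on $\partial\Omega$, $\int_{\Sigma_t}{\bf V}\cdot{\bf l}\,ds=m_0$ for all $t$, and $\|{\bf V}\|_{L^\infty(\Omega)}\le Cm_0$ (such ${\bf V}$ exists because $\Omega$ is a uniformly bi-Lipschitz image of a cylinder with an excised ball, and the flux $m_0$ can be spread over a cross-section of bounded area), then set $\Psi(x)=\chi(x_3)\int_{-L}^{x_3}\!\big(\text{suitable component}\big)$ — more cleanly, just pick any $\Psi\in\mathcal H_L$ with $\|\nabla\Psi\|_{L^\infty(\Omega_L)}\le Cm_0$ and $\partial_3\Psi = m_0/|\Sigma_L|$ near $\Sigma_L$. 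Then $\mathcal I_L(\bar\varphi_L)\le \mathcal I_L(\Psi)\le \frac12\|\nabla\Psi\|_{L^2(\Omega_L)}^2 \le Cm_0^2|\Omega_L|$, which grows with $L$ and is therefore not yet the claimed bound; the fix is to test \eqref{weak_solutionI3} with $\psi=\bar\varphi_L$ itself, giving $\|\nabla\bar\varphi_L\|_{L^2(\Omega_L)}^2=\frac{m_0}{|\Sigma_L|}\int_{\Sigma_L}\bar\varphi_L\,dx'$, and then to control the right side by the boundary flux identity rather than by the trace theorem: the divergence theorem applied slabwise shows $\int_{\Sigma_t}\partial_3\bar\varphi_L\,dx'=m_0$ for a.e.\ $t$, and combined with a Poincar\'e inequality on each cross-section (whose constant is uniform by the bi-Lipschitz structure \eqref{tildeOmega}) plus Cauchy--Schwarz in $x_3$, one extracts $\frac{m_0}{|\Sigma_L|}\int_{\Sigma_L}\bar\varphi_L\,dx'\le \frac12\|\nabla\bar\varphi_L\|_{L^2(\Omega_L)}^2 + C|\Omega_L|m_0^2/\overline S^2$; the volume factor again appears. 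The honest route is therefore: average. Divide \eqref{key_estimae} by $|\Omega_L|$ from the start — that is exactly what the dashint is — and use that the competitor $\Psi$ has $\dashint_{\Omega_L}|\nabla\Psi|^2\,dx\le Cm_0^2$ uniformly, together with $\mathcal I_L(\bar\varphi_L)\le\mathcal I_L(\Psi)$ and the lower bound $\mathcal I_L(\bar\varphi_L)\ge\frac14\|\nabla\bar\varphi_L\|_{L^2}^2-C\dashint$-type boundary term, all normalized by $|\Omega_L|$.

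I expect the main obstacle to be making the uniform-in-$L$ bound genuinely uniform: a careless competitor or a careless use of the trace inequality produces constants that depend on $L$ through $|\Omega_L|$ or through a Poincar\'e constant on the long thin domain. The clean way around this is to exploit the one-dimensional structure along $x_3$: write $\bar\varphi_L(x',x_3)-\dashint_{\Sigma_{x_3}}\bar\varphi_L\,dx'$ and control it by $\|\nabla'\bar\varphi_L\|$ cross-sectionally with a constant uniform by \eqref{tildeOmega}, and control the $x_3$-dependence of the average by $\partial_3\bar\varphi_L$; the flux normalization $\int_{\Sigma_{x_3}}\partial_3\bar\varphi_L\,dx'=m_0$ then caps the mean drift. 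With these ingredients the boundary term in $\mathcal I_L(\bar\varphi_L)$ is bounded by $\tfrac12\|\nabla\bar\varphi_L\|_{L^2(\Omega_L)}^2 + C m_0^2|\Omega_L|$, absorption gives $\|\nabla\bar\varphi_L\|_{L^2(\Omega_L)}^2\le Cm_0^2|\Omega_L|$, and dividing by $|\Omega_L|$ yields \eqref{key_estimae} with $C$ independent of $L$. I would keep all auxiliary constants tracked explicitly through this absorption step, since that is precisely where an $L$-dependence can sneak in.
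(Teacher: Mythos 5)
Your proposal is correct and follows essentially the same route as the paper: establish coercivity of $\mathcal I_L$ from the $L$-uniform bound $\big|\int_{\Sigma_L}\psi\,dx'\big|\le C\int_{\Omega_L}|\nabla\psi|\,dx\le C|\Omega_L|^{1/2}\|\nabla\psi\|_{L^2(\Omega_L)}$ (which the paper gets by flattening a sub-nozzle avoiding the obstacle and using $\psi=0$ on $\Sigma_{-L}$), obtain the minimizer by the direct method, and then test the Euler--Lagrange identity with $\bar\varphi_L$, absorb, and divide by $|\Omega_L|$ to get $\dashint_{\Omega_L}|\nabla\bar\varphi_L|^2dx\le Cm_0^2$ with $C$ independent of $L$. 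The paper simply delegates this last step to \cite[Theorem 4]{MR2824469}, so your worked-out absorption argument (keeping the Young coefficient strictly below the one in the tested identity) is the same argument made explicit; the cross-sectional Poincar\'e/mean-drift detour you sketch is unnecessary and the direct trace-type bound you also state is the cleaner path.
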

\begin{proof}
	Choose a subset $U_L\subset\Omega_L$ such that $U_L\cap\Omega'=\emptyset$ and $\partial U_L\setminus(\Sigma_{-L}\cup\Sigma_{L})$ is $C^{2,\alpha}$ (see Figure. 1). Denote  $\mathscr{C}_L=B_1(0)\times\{-L\leq x_3\leq L\}$.
	Then there exists an invertible $C^{2,\alpha}$ map $\mathcal T_L$: ${U}_L\rightarrow \mathscr{C}_L$, $x\rightarrow y$ satisfying\\
	(i) $\mathcal T_L(\partial{U}_L)=\partial \mathscr{C}_L$.\\
	(ii) For any $-L\leq k\leq L$, $\mathcal T_L({U}_L\cap\{x_3=k\})=B_1(0)\times\{y_3=k\}$.\\
	(iii)$ \|\mathcal T_L\|_{C^{2,\alpha}},\ \|\mathcal T_L^{-1}\|_{C^{2,\alpha}}\leq C$.\\
	\begin{center}
		\includegraphics[width=0.8\textwidth]{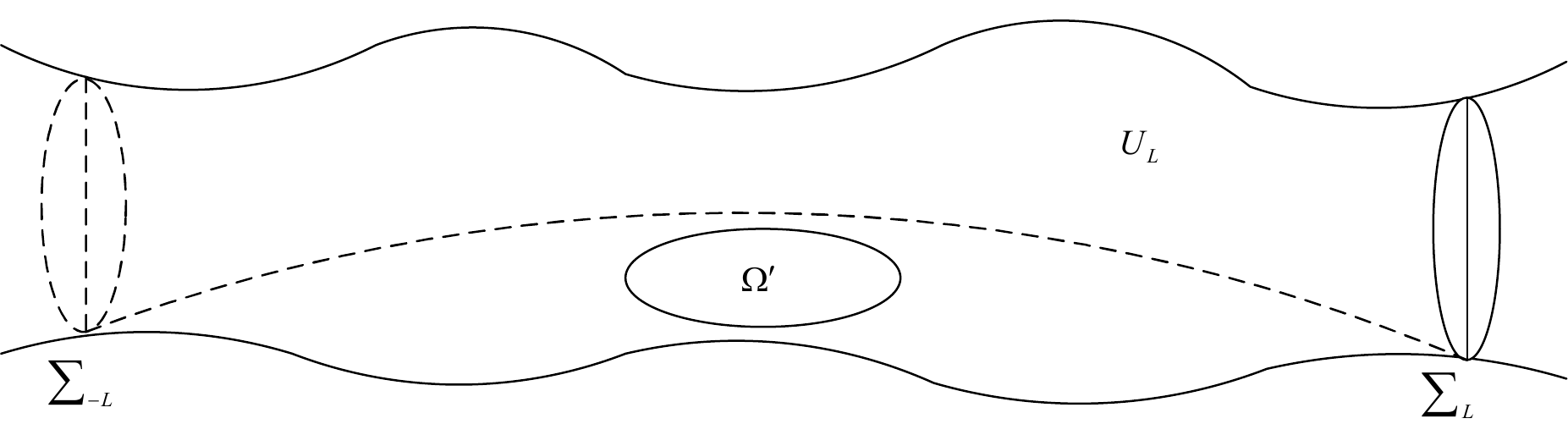}\\
		{\small Figure 1. Domain of $U_L$}
	\end{center}
The straightforward computations yield that
	\begin{equation}\label{S_Lestimate}
	\begin{split}
	\bigg|\int_{\Sigma_L}\psi dx'\bigg|&\leq C\int_{B_1(0)}|\psi(y',L)| dy'\leq C\int_{B_1(0)}\bigg(\int_{-L}^L|\partial_{y_3}\psi| dy_3\bigg)dy'\\
	&\leq C\int_{\mathscr{C}_L}|\nabla\psi|dy\leq C\int_{{U}_L}|\nabla\psi|dx\leq C\int_{{\Omega}_L}|\nabla\psi|dx.
	\end{split}
	\end{equation}
	Applying H$\ddot{\text{o}}$lder inequality gives
	\begin{equation}\label{estimate_on_S_L}
	\bigg|\int_{\Sigma_L}\psi dx'\bigg|\leq C |{\Omega}_L|^{\frac{1}{2}}\|\nabla\psi\|_{L^2(\Omega_L)}.
	\end{equation}
	The constant $C$ here and subsequently in the rest of the paper may change from line to line as long as what these constants depend on is clear.
	Substituting the estimate \eqref{estimate_on_S_L} into \eqref{functional} yields that
	\begin{equation}\label{coercive}
	\begin{split}
	\mathcal I_L(\psi)&=\int_{{\Omega}_L}|\nabla\psi|^2dx-\frac{m_0}{|\Sigma_{L}|}\int_{\Sigma_{L}}\psi dx'\\
	&\geq\int_{{\Omega}_L}|\nabla\psi|^2dx-C'\|\nabla\psi\|_{L^2({\Omega}_L)}\\
	&\geq\frac{1}{2}\|\nabla\psi\|_{L^2({\Omega}_L)}^2-C',
	\end{split}
	\end{equation}
	where $C'$ depends on $m_0$, $\overline{S}$, $\underline{{S}}$ and $|{\Omega}_L|$.
	This implies that the functional $\mathcal I_L(\psi)$ is coercive.
	Hence, as the same as the proof in \cite[Theorem 4]{MR2824469}, $\mathcal I_L(\psi)$ has a minimizer $\bar\varphi_L$ satisfying \eqref{key_estimae}. Moreover, the straight computations yield that
    $\bar\varphi_{L}$ is a weak solution of \eqref{problemI3}. Therefore, the proof is completed.
\end{proof}

From now on, denote  $\Omega(t_1,t_2)={\Omega}\cap\{t_1<x_3<t_2\}$.
Note that for any $f\in H^1(\Omega(t_1-1,t_2+1)$,
\begin{equation}\label{local estimate}
\bigg|\dashint_{\Omega(t_1-1,t_1)}f dx-  \dashint_{\Omega(t_2,t_2+1)}f dx\bigg|\leq C\int_{\Omega(t_1-1,t_2+1)}|\nabla f|dx,
\end{equation}
where $C$ is a constant which depends on $\Omega$ but is independent of $t_1$ and $t_2$. For the detailed proof of \eqref{local estimate}, one may refer to \cite[Proposition 4]{MR2824469}.
Furthermore,  the following Poincar$\acute{e}$ inequality
\begin{equation}\label{poin}
\bigg\|f(x)-\dashint_{\Omega(t,t+1)}f(x)dx\bigg\|_{L^p(\Omega(t,t+1))}\leq C\|\nabla f(x)\|_{L^p(\Omega(t,t+1))},
\end{equation}
holds \cite[Section 5.8 Theorem 1]{Evans2010Partial}, where $t\in \mathbb R$, $p\in[1,+\infty)$ and $C$ is independent of $t$.

In order to study the compressible flows,  the subsonic coefficients truncations are also needed to avoid the degeneracy of the equation near the sonic points. For $0<\epsilon_0<1$ and $0<\theta<1$, there exists a $q^\epsilon_{\theta}(\phi)$ such that $M^\epsilon(\phi)<\theta$. Set
$\mathring q^{\epsilon_0}_{\theta}(\phi)=\inf\limits_{0<\epsilon<\epsilon_0}q_\theta^\epsilon(\phi)$.
Denote
\begin{equation}
\hat q(q^2,\phi)=\begin{cases}q^2-2\phi\quad &\text{if $|q|\leq \mathring q^{\epsilon_0}_\theta(\phi)$},\\
\text{monotone smooth function}\quad&\text{if $\mathring q_\theta^{\epsilon_0}<|q|\leq\mathring q_{\frac{\theta+1}{2}}^{\epsilon_0}$},\\
\sup\limits_{x\in\Omega}\bigg(\big(\mathring q_{\frac{\theta+1}{2}}^{\epsilon_0}\big)^2(\phi)-2\phi\bigg)&\text{if $|q|>\mathring q_{\frac{\theta+1}{2}}^{\epsilon_0}$}.
\end{cases}
\end{equation}
Define
\begin{equation}\hat\rho^\epsilon(\mathcal G,\phi)=\tilde h^{-1}\bigg(\tilde h(1)-\frac{\epsilon^2\hat q(\mathcal G,\phi)}{2}\bigg).
\end{equation}

We first study the following problem with the subsonic truncations,
\begin{equation}\label{problemC3}
\begin{cases}
\Div\big(\hat\rho^\epsilon(|\nabla\varphi^\epsilon|^2,\phi)\nabla\varphi^\epsilon\big)=0,&\text{ in }\Omega,\\
\frac{\partial\varphi^\epsilon}{\partial{\bf n}}=0, &\text{ on } \partial\Omega,\\
\int_{\Sigma_t}\rho^\epsilon \nabla\varphi^\epsilon\cdot {\bf{l}}ds=m_0.
\end{cases}
\end{equation}
Denote
\begin{equation*}
\hat\rho_{\mathcal G}=\frac{\partial \hat\rho}{\partial \mathcal G }\quad\text{and}\quad\hat\rho_\phi=\frac{\partial\hat\rho}{\partial \phi}.
\end{equation*}
By the straightforward calculations,  the equation in  $\eqref{problemC3}$ can be written as
\begin{equation*}
\hat{a}_{ij}(\nabla\varphi^\epsilon,\phi)\partial_{ij}\varphi^\epsilon+\hat{b}_i(\nabla\varphi^\epsilon,\phi)\partial_{i}\varphi^\epsilon=0,
\end{equation*}
where
\begin{equation*}
\hat a_{ij}(\nabla\varphi^\epsilon,\phi)=\hat\rho^\epsilon(|\nabla\varphi^\epsilon|^2,\phi)\delta_{ij}+2\hat\rho_{\mathcal G}^\epsilon(|\nabla\varphi^\epsilon|^2,\phi)\partial_i\varphi^\epsilon\partial_j\varphi^\epsilon\quad\text{and}\quad \hat{b}_i(\nabla\varphi^\epsilon,\phi)=\hat{\rho}_{\phi}\partial_i\phi.
\end{equation*}
The directly calculations give that
\begin{equation}\label{elliptic coefficients}
\lambda\xi^2\leq\hat a_{ij}\xi_i\xi_j\leq\Lambda\xi^2\quad\text{ for $\xi\in\mathbb R^3$}\quad\text{and}\quad
 |\hat{b}_i(\nabla\varphi^\epsilon,\phi)|\leq C|\partial_i\phi|,
\end{equation}
where constants $\lambda$, $\Lambda$ and $C$ are independent of $\varphi^{\epsilon}$.

Since the domain $\Omega$ is unbounded, one can first study the following problem in truncated domains,
\begin{equation}\label{problemC4}
\begin{cases}
\Div(\hat\rho^\epsilon(|\nabla\varphi_L^\epsilon|^2,\phi)\nabla\varphi_L^\epsilon)=0,\quad&\text{in $\Omega_L$},\\
\frac{\partial\varphi_L^\epsilon}{\partial{\bf n}}=0,\quad&\text{on $\partial\Omega_L$},\\
\frac{\partial\varphi_L^\epsilon}{\partial{x_3}}=\frac{m_0}{|\Sigma_L|},\quad&\text{on $\Sigma_L$},\\
\varphi_L^\epsilon=0,\quad&\text{on $\Sigma_{-L}$}.
\end{cases}
\end{equation}
Denote
\begin{equation}
G^\epsilon(\mathcal G,\phi)=\frac{1}{2}\int_{0}^{\mathcal{G}}\hat\rho^\epsilon(\alpha,\phi)d\alpha.
\end{equation}
For  given solution $\bar\varphi_L$ of \eqref{problemI3} and the potential $\phi$ of the force, define
\begin{equation}
\mathcal L^\epsilon(\psi)=\epsilon^{-4}\int_{\Omega_L}G^\epsilon(|\nabla\psi|^2,\phi)-G^\epsilon(|\nabla\bar\varphi_L|^2,\phi)-\nabla\bar\varphi_L\cdot(\nabla\psi-\nabla\bar\varphi_L)dx\quad\text{for $\psi\in\mathcal H_L$},
\end{equation}
and  $\mathcal J^{\epsilon}(\tilde\psi)=\mathcal L^{\epsilon}(\bar\varphi_L+\epsilon^2\tilde\psi)$ for all $\tilde\psi\in\mathcal H_L$.
\begin{lemma}\label{lemma_local estimate} $\mathcal J^\epsilon(\tilde\psi)$ admits a unique minimizer $\tilde\varphi^\epsilon_L\in\mathcal H_L$ and 	 $\varphi^\epsilon_L$ is a weak solution of \eqref{problemC4} where
 \begin{equation}\label{tilde_varphi}
	\varphi_L^\epsilon=\bar\varphi_L+\epsilon^2\tilde\varphi_L^\epsilon.\end{equation}
Moveover, for any $t\in\big(\frac{-L}{4},\frac{L}{4}\big)$, one has
	\begin{equation}\label{local_estimate}
	\dashint_{\Omega(t,t+1)}|\nabla\bar\varphi_L|^2+|\nabla\tilde\varphi_L^\epsilon|^2dx\leq Cm_0^2,
	\end{equation}
	where $C$ is independent of $L$ and $\epsilon$.
\end{lemma}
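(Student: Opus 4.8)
The plan is to realize $\tilde\varphi_L^\epsilon$ as the unique minimizer of a uniformly convex functional, read off its Euler--Lagrange equation to recover \eqref{problemC4}, and then upgrade the global energy bound coming from the minimization to the slab-wise estimate \eqref{local_estimate} using the nozzle machinery already cited in the proof of the previous lemma. For the first part I would recast $\mathcal J^\epsilon$ in divergence form as $\mathcal J^\epsilon(\tilde\psi)=\int_{\Omega_L}\tilde F(x,\nabla\tilde\psi)\,dx$ with $\tilde F(x,q)=\epsilon^{-4}\big(G^\epsilon(|\nabla\bar\varphi_L+\epsilon^2q|^2,\phi)-G^\epsilon(|\nabla\bar\varphi_L|^2,\phi)-\epsilon^2\nabla\bar\varphi_L\cdot q\big)$. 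A short computation --- and here the $\epsilon^2$-scaling in the ansatz \eqref{tilde_varphi} is exactly what is needed --- gives $D^2_q\tilde F(x,q)=\hat a_{ij}(\nabla\bar\varphi_L+\epsilon^2q,\phi)$, so by \eqref{elliptic coefficients} the integrand is uniformly convex in $q$, with constants $\lambda,\Lambda$ that for $\epsilon$ small (since $\hat\rho^\epsilon=1+O(\epsilon^2)$) may be taken independent of $\epsilon$ and $L$; in particular $\mathcal J^\epsilon$ is weakly lower semicontinuous. Moreover $\tilde F(x,0)=0$ and $D_q\tilde F(x,0)=\epsilon^{-2}\big(\hat\rho^\epsilon(|\nabla\bar\varphi_L|^2,\phi)-1\big)\nabla\bar\varphi_L$, which is pointwise bounded by $C|\nabla\bar\varphi_L|$ because the subsonic truncation and $\phi\in L^\infty$ keep $\hat q$, hence $(1-\hat\rho^\epsilon)/\epsilon^2$, bounded; together with \eqref{key_estimae} this puts $D_q\tilde F(\cdot,0)$ in $L^2(\Omega_L)$. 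Convexity then gives $\tilde F(x,q)\ge D_q\tilde F(x,0)\cdot q+\tfrac{\lambda}{2}|q|^2$, whence coercivity of $\mathcal J^\epsilon$ on $\mathcal H_L$ via the Poincar\'e inequality (recall $\tilde\psi=0$ on $\Sigma_{-L}$). The direct method yields a unique minimizer $\tilde\varphi_L^\epsilon\in\mathcal H_L$, and comparing with $\mathcal J^\epsilon(0)=0$ also records $\|\nabla\tilde\varphi_L^\epsilon\|_{L^2(\Omega_L)}\le C\|\nabla\bar\varphi_L\|_{L^2(\Omega_L)}$ with $C$ independent of $\epsilon$ and $L$.

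Next I would compute the first variation: for $\eta\in\mathcal H_L$, $\tfrac{d}{ds}\mathcal J^\epsilon(\tilde\varphi_L^\epsilon+s\eta)\big|_{s=0}=\epsilon^{-2}\int_{\Omega_L}\big(\hat\rho^\epsilon(|\nabla\varphi_L^\epsilon|^2,\phi)\nabla\varphi_L^\epsilon-\nabla\bar\varphi_L\big)\cdot\nabla\eta\,dx$, with $\varphi_L^\epsilon$ as in \eqref{tilde_varphi}. Setting this to zero and inserting the weak form of \eqref{problemI3} for $\bar\varphi_L$, namely $\int_{\Omega_L}\nabla\bar\varphi_L\cdot\nabla\eta\,dx=\tfrac{m_0}{|\Sigma_L|}\int_{\Sigma_L}\eta\,dx'$, collapses it to $\int_{\Omega_L}\hat\rho^\epsilon(|\nabla\varphi_L^\epsilon|^2,\phi)\nabla\varphi_L^\epsilon\cdot\nabla\eta\,dx=\tfrac{m_0}{|\Sigma_L|}\int_{\Sigma_L}\eta\,dx'$ for all $\eta\in\mathcal H_L$, which is precisely the weak formulation of \eqref{problemC4}; uniqueness of the weak solution follows from the strict convexity established above.

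For the slab-wise estimate \eqref{local_estimate}, subtracting the two identities just used gives $\int_{\Omega_L}\hat\rho^\epsilon\nabla\tilde\varphi_L^\epsilon\cdot\nabla\eta\,dx=\int_{\Omega_L}\tfrac{1-\hat\rho^\epsilon}{\epsilon^2}\nabla\bar\varphi_L\cdot\nabla\eta\,dx$ for all $\eta\in\mathcal H_L$, so $\tilde\varphi_L^\epsilon$ solves a uniformly elliptic equation whose source is the divergence of a field bounded by $C|\nabla\bar\varphi_L|$ with $C$ independent of $\epsilon$. For $\bar\varphi_L$ itself \eqref{key_estimae} controls the global average, and one promotes it to $\dashint_{\Omega(t,t+1)}|\nabla\bar\varphi_L|^2\,dx\le Cm_0^2$ on $(-\tfrac L4,\tfrac L4)$ by combining the conservation of mass flux $\int_{\Sigma_t}\partial_{x_3}\bar\varphi_L\,dx'=m_0$ (read off from \eqref{weak_solutionI3} by testing against functions of $x_3$ alone), interior and boundary elliptic estimates after flattening $\partial\Omega$ by the map $\mathcal T_L$ and reflecting evenly across it thanks to the Neumann condition, and the mean-value propagation inequality \eqref{local estimate} together with the Poincar\'e inequality \eqref{poin} --- this is the argument of \cite{MR2824469}. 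The same scheme then runs for $\tilde\varphi_L^\epsilon$ through its elliptic equation, the source being absorbed by Young's inequality against the bound just obtained for $\nabla\bar\varphi_L$, all constants staying independent of $L$ and $\epsilon$; adding the two bounds gives \eqref{local_estimate}.

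The genuine obstacle is this last step. Everything through the Euler--Lagrange computation is routine once the convexity structure of $\tilde F$ is seen; but the global energy only grows like $|\Omega_L|^{1/2}\sim L^{1/2}$, so uniformity in $L$ forces a Saint--Venant/Oleinik-type propagation anchored by the mass-flux conservation, in the spirit of \cite{MaLei, Ole1981ON}, and pushing this through uniformly in $\epsilon$ for the correction $\tilde\varphi_L^\epsilon$ works only because the $\epsilon^{-2}$-rescaled density defect $(1-\hat\rho^\epsilon)/\epsilon^2$ remains bounded --- which is exactly the purpose of the subsonic truncation and the $\epsilon^2$-scaling in \eqref{tilde_varphi}.
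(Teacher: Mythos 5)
Your proposal is correct and is essentially the argument the paper intends (the paper itself omits the proof, deferring to \cite{2019arXiv190101048L}): the direct method for the uniformly convex functional $\mathcal J^\epsilon$, whose Hessian in $\nabla\tilde\psi$ is exactly $\hat a_{ij}$, the Euler--Lagrange identity combined with \eqref{weak_solutionI3} to identify $\varphi^\epsilon_L=\bar\varphi_L+\epsilon^2\tilde\varphi^\epsilon_L$ as a weak solution of \eqref{problemC4}, and the Xie--Xin-type slab-wise propagation (mass-flux conservation, cross-sectional Poincar\'e, and \eqref{local estimate}) run first for $\bar\varphi_L$ and then for the uniformly elliptic equation satisfied by $\tilde\varphi^\epsilon_L$, whose source is bounded by $C|\nabla\bar\varphi_L|$ precisely because the subsonic truncation keeps $(1-\hat\rho^\epsilon)/\epsilon^2$ bounded. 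The only cosmetic point is that the minimization yields the natural, density-weighted end condition $\hat\rho^\epsilon\partial_3\varphi^\epsilon_L=m_0/|\Sigma_L|$ on $\Sigma_L$, so the third line of \eqref{problemC4} is to be understood in that weak sense, which is exactly how the paper (and \cite{MR2824469}) uses it.
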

The proof of Lemma \ref{lemma_local estimate} is the same to \cite{{2019arXiv190101048L}}, so we omit the details here.

Since $\bar\varphi_L$ and $\varphi^\epsilon_L$ are weak solutions of quasilinear elliptic equations of divergence form, similar to \cite[Lemmas 6 and 7 ]{MR2824469} and \cite[Lemma 4.5]{2019arXiv190101048L}, using the Nash-Moser iteration yields that there exists a positive constant $K'<\frac{L}{4}$ such that
\begin{equation}\label{regularity}
\|\nabla\tilde\varphi_L\|_{C^{0,\alpha}(\Omega(-K',K'))}+\|\nabla\bar\varphi_L\|_{C^{0,\alpha}(\Omega(-K',K'))}+\|\nabla\varphi^\epsilon_L\|_{C^{0,\alpha}(\Omega(-K',K'))}\leq Cm_0,
\end{equation}
where $C$ is a constant independent of $K'$.

\section{The Existence And Uniqueness of The Solution In The Whole Domain }\label{existence and uniqueness}
For any  fixed $\bar{x}\in{\Omega}$, choose $L$ large enough such that $\bar{x}\in{\Omega}(\frac{-L}{4},\frac{L}{4})$.  With abuse notations, we still denote $\varphi_L^\epsilon-\varphi_L^\epsilon(\bar x)$ and $\bar\varphi_L-\bar\varphi_L(\bar x)$ by $\varphi_L^\epsilon$ and $\bar\varphi_L$, respectively. It  follows from  \eqref{regularity} that \begin{equation} \|\nabla\bar\varphi_L\|_{C^{0,\alpha}(\Omega(-K',K'))}+ \|\nabla\varphi^\epsilon_L\|_{C^{0,\alpha}(\Omega(-K',K'))}\leq Cm_0.\end{equation}

By the standard diagonal procedure,  there exist functions $\bar\varphi\in C^{1,\alpha}_{loc}(\Omega)$ and $\varphi^\epsilon\in C^{1,\alpha}_{loc}(\Omega)$ such that for some $\alpha'<\alpha$,
\begin{equation*}
\lim\limits_{n\rightarrow\infty}\|\bar\varphi_{L_n}-\bar\varphi\|_{C^{1,\alpha'}(\Omega(-K',K'))}=0\quad\text{and}\quad \lim\limits_{n\rightarrow\infty}\|\varphi^\epsilon_{L_n}-\varphi^\epsilon\|_{C^{1,\alpha'}(\Omega(-K',K'))}=0.
\end{equation*}
Therefore, $\bar\varphi$ and $\varphi^\epsilon=\bar\varphi+\epsilon^2\tilde\varphi^\epsilon$ are solutions of \eqref{problemI2} and \eqref{problemC3}, respectively.

Now we are in position to prove the uniqueness of the solutions. We use a method  different from that in \cite{MR2824469} and  some energy estimates obtained here are also useful for the proof of  the convergence rates of velocity fields in Section \ref{secrate}.
\begin{lemma}
	Assume $\Omega$ satisfies \eqref{tildeOmega} and \eqref{Omega'}. Let $\bar\varphi_1$ and $\bar\varphi_2$ be the two  solutions of \eqref{problemI2}. Let $ \varphi^\epsilon_1$ and $\varphi^\epsilon_2$ be the two uniformly subsonic solutions of \eqref{problemC3}. Then $\nabla\bar\varphi_1=\nabla\bar\varphi_2$ and $\nabla\varphi^\epsilon_1=\nabla\varphi^\epsilon_2$ in $\Omega$.
\end{lemma}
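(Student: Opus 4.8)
The plan is to prove uniqueness for the incompressible and compressible problems separately, in each case by deriving an energy identity for the difference of two solutions on truncated domains and then letting the truncation parameter tend to infinity, using the uniform estimate \eqref{key_estimae} (resp.\ \eqref{local_estimate}) to kill the boundary terms. Set $w = \bar\varphi_1 - \bar\varphi_2$ in the incompressible case. Since both $\bar\varphi_i$ solve \eqref{problemI2}, $w$ is harmonic, satisfies the homogeneous Neumann condition on $\partial\Omega$, and has zero flux through every cross section $\Sigma_t$. Multiplying $\triangle w = 0$ by $w\,\zeta_L$, where $\zeta_L$ is a cutoff that equals $1$ on $\Omega(-L,L)$ and is supported in $\Omega(-2L,2L)$ with $|\nabla\zeta_L|\le C/L$, and integrating by parts (the boundary integral on $\partial\Omega$ vanishes by the Neumann condition), I would obtain
\begin{equation*}
\int_{\Omega}|\nabla w|^2\zeta_L\,dx = -\int_{\Omega} w\,\nabla w\cdot\nabla\zeta_L\,dx.
\end{equation*}
The right-hand side is bounded, via Cauchy--Schwarz, by $\frac{C}{L}\|w\|_{L^2(\Omega(-2L,2L)\setminus\Omega(-L,L))}\|\nabla w\|_{L^2(\Omega(-2L,2L))}$; here one must first normalize $w$ on each annular piece by subtracting its average (which changes nothing since only $\nabla w$ appears) and invoke the Poincar\'e inequality \eqref{poin} together with the zero-flux condition, so that $\|w - \text{const}\|_{L^2}\le C\|\nabla w\|_{L^2}$ on unit slices. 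Using \eqref{key_estimae} for both solutions, $\|\nabla w\|_{L^2(\Omega(-2L,2L))}^2 \le CLm_0^2$, so the right-hand side is $O(L^{-1}\cdot L) = O(1)$ — which alone is not enough, so the argument must be sharpened: choose $\zeta_L$ so that $\nabla\zeta_L$ is supported only in the two unit annuli $\Omega(L,L+1)\cup\Omega(-L-1,-L)$, giving the bound $C\|w-c_\pm\|_{L^2(\text{annuli})}\|\nabla w\|_{L^2(\text{annuli})}\le C\|\nabla w\|_{L^2(\text{annuli})}^2$; then one shows this annular energy tends to $0$ along a subsequence $L_n\to\infty$ because $\sum_n \|\nabla w\|_{L^2(\Omega(n,n+1))}^2$ would otherwise force super-linear growth of $\int_{\Omega(-L,L)}|\nabla w|^2$, contradicting \eqref{key_estimae}. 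Letting $L_n\to\infty$ gives $\int_\Omega|\nabla w|^2 = 0$, hence $\nabla\bar\varphi_1 = \nabla\bar\varphi_2$.

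For the compressible case, set $w = \varphi^\epsilon_1 - \varphi^\epsilon_2$. Both solve the quasilinear equation $\Div(\hat\rho^\epsilon(|\nabla\varphi^\epsilon_i|^2,\phi)\nabla\varphi^\epsilon_i) = 0$ with homogeneous Neumann data and equal mass flux. Subtracting the two equations and writing the difference in divergence form, $\hat\rho^\epsilon(|\nabla\varphi^\epsilon_1|^2,\phi)\nabla\varphi^\epsilon_1 - \hat\rho^\epsilon(|\nabla\varphi^\epsilon_2|^2,\phi)\nabla\varphi^\epsilon_2 = A(x)\nabla w$, where
\begin{equation*}
A(x) = \int_0^1 \Big(\hat\rho^\epsilon(|\nabla\varphi^\epsilon_t|^2,\phi)\,\mathrm{Id} + 2\hat\rho^\epsilon_{\mathcal G}(|\nabla\varphi^\epsilon_t|^2,\phi)\,\nabla\varphi^\epsilon_t\otimes\nabla\varphi^\epsilon_t\Big)\,dt,\quad \nabla\varphi^\epsilon_t := \nabla\varphi^\epsilon_2 + t\nabla w,
\end{equation*}
the matrix $A$ is uniformly elliptic by the same computation that gives \eqref{elliptic coefficients} (the subsonic truncation $\hat\rho$ makes $A$ elliptic regardless of the size of $\nabla\varphi^\epsilon_t$, which is exactly why the truncated density is used). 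Then $\Div(A(x)\nabla w) = 0$, and I would run the identical cutoff-and-Poincar\'e argument: test against $w\zeta_{L_n}$, use ellipticity of $A$ on the left, Cauchy--Schwarz and $|A|\le\Lambda$ on the right, and the uniform bound \eqref{local_estimate} to push the annular boundary term to zero along a subsequence. This yields $\int_\Omega \lambda|\nabla w|^2 \le 0$, hence $\nabla\varphi^\epsilon_1 = \nabla\varphi^\epsilon_2$.

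The main obstacle I anticipate is the boundary-term control at infinity: naively the annular term is only $O(1)$, not $o(1)$, so the argument genuinely needs the selection of a good subsequence $L_n$ along which the slice energies $\|\nabla w\|_{L^2(\Omega(L_n,L_n+1))}^2 \to 0$ — this is forced by the fact that the total energy $\int_{\Omega(-L,L)}|\nabla w|^2\,dx$ grows at most linearly in $L$ by \eqref{key_estimae}/\eqref{local_estimate}, so the increments cannot stay bounded away from zero. A secondary technical point is making sure the cutoff function $\zeta_L$ respects the geometry of $\Omega$ near $\partial\Omega$ and near the obstacle $\Omega'$ so that no spurious boundary integral appears; since $\zeta_L$ depends only on $x_3$ this is automatic, and the Neumann condition disposes of $\int_{\partial\Omega}$. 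One should also verify that the flux condition is what licenses replacing $w$ by $w$ minus a slice-average in the Poincar\'e step without introducing a constant that survives (it does survive as an additive constant in $w$, which is harmless since uniqueness is asserted only for $\nabla w$).
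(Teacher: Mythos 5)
Your setup matches the paper's: reduce both cases to a uniformly elliptic divergence-form equation for the difference (your matrix $A$ is the paper's $\mathfrak a_{ij}$), test against the difference times a cutoff in $x_3$, kill the lateral boundary term with the Neumann condition, and use the equal-mass-flux condition $\int_{\Sigma_t}\mathfrak a_{i3}\partial_i\Phi\,dx'=0$ together with the cross-sectional Poincar\'e inequality to control the term produced by $\partial_3\zeta$. The defect is in how you close the argument. Your claim that the unit-annulus energies $a_n=\|\nabla w\|^2_{L^2(\Omega(n,n+1))}$ must tend to zero along a subsequence, ``because otherwise the total energy would grow super-linearly, contradicting \eqref{key_estimae},'' is false: if $a_n\geq\delta>0$ for all $n$, the truncated energy $E(L)=\int_{\Omega(-L,L)}|\nabla w|^2dx$ grows only \emph{linearly}, and linear growth is exactly what \eqref{key_estimae}, \eqref{local_estimate} and the uniform gradient bound \eqref{regularity} permit ($E(L)\leq C L$). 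So no contradiction arises and the subsequence you need is not guaranteed to exist; as written, the proof does not rule out a difference whose Dirichlet energy is spread uniformly along the nozzle.

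The inequality you actually derived, $E(L)\leq C\big(E(L+1)-E(L)\big)$, is the right tool, but it must be exploited by iteration (hole-filling / Saint-Venant growth), not by a subsequence selection: it gives $E(L+1)\geq(1+1/C)\,E(L)$, hence exponential growth of $E$ unless $E\equiv 0$, and exponential growth contradicts the at-most-linear bound coming from the boundedness of $|\nabla w|$. This is precisely what the paper's proof accomplishes in one stroke with the exponential weight $\zeta$ of \eqref{zeta}: multiplying by $\Phi(\zeta-1)$, choosing $\beta=\lambda/\Lambda^2$ and $t_1=-T$, $t_2=T$, $\mathfrak h=T$, it obtains
\begin{equation*}
\int_{\Omega(-T,T)}|\nabla\Phi|^2dx\leq Ce^{-\beta T}\int_{\Omega(-2T,2T)}|\nabla\Phi|^2dx\leq CTe^{-\beta T}\max|\nabla\Phi|^2\rightarrow 0,
\end{equation*}
so the exponential gain beats the linear volume growth. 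If you replace your subsequence step by this iteration (or by the exponential weight directly), the rest of your argument, including the compressible reduction via the integrated coefficient matrix, goes through and coincides with the paper's proof.
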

\begin{proof}Denote $\Phi=\varphi^\epsilon_1-\varphi^\epsilon_2$. Then $\Phi$ satisfies
	\begin{equation}\label{uniquenesseq}
	\begin{cases}
	\partial_i(\mathfrak a_{ij}\partial_j\Phi)=0,&\quad \text{in $\Omega$,}\\
	\frac{\partial\Phi}{\partial{\bf n}}=0,&\quad \text{on $\partial\Omega$,}
	\end{cases}
	\end{equation}
	where \begin{equation}\mathfrak a_{ij}=\int_0^1\hat\rho^\epsilon( \mathfrak q^2,\phi)\delta_{ij}+2\rho_{\mathcal G}(\mathfrak q ^2,\phi)(s\partial_j\varphi_1^\epsilon+(1-s)\partial_j\varphi_2^\epsilon)(s\partial_i\varphi_1^\epsilon+(1-s)\partial_i\varphi_2^\epsilon)ds
	\end{equation}
	with
	\begin{equation}
	\mathfrak q^2=|s\nabla\varphi^\epsilon_1+(1-s)\nabla\varphi_2^\epsilon|^2.
	\end{equation}
	The straightforward computations show that there exist two constants $\lambda$ and $\Lambda$ such that
	\begin{equation}\label{aij_elliptic}
	\lambda|\xi|^2\leq\mathfrak a_{ij}\xi_i\xi_j\leq\Lambda|\xi|^2\quad\text{for $\xi\in\mathbb{R}^3$}.
	\end{equation}
	Moreover, one can increase $\Lambda$ so that the following Poincar$\acute{e}$ inequality holds on each cross section,
	\begin{equation}\label{Poincare_inequality}
	\bigg\|\mathcal Z-\dashint_{{\Sigma}_t}\mathcal Z ds\bigg\|_{L^2({\Sigma}_t)}\leq\Lambda\|\nabla \mathcal Z\|_{L^2({\Sigma}_t)}\quad\text{for any $\mathcal Z\in H^2_{loc}(\Omega(t-\epsilon,t+\epsilon))$.}
	\end{equation}
	
	For any $t_1<t_2$, $\mathfrak h$ and $\beta$ are constants to be determined later.  Denote
	\begin{equation}\label{zeta}
	\zeta(x_3;t_1,t_2,,\beta,\mathfrak h)=
	\begin{cases}
	1  &  x_3\leq t_1-\mathfrak h,\\
	e^{\beta(x_3-t_1+\mathfrak h)} &   t_1-\mathfrak h< x_3\leq t_1,\\
	e^{\beta \mathfrak h} & t_1< x_3\leq t_2,\\
	e^{\beta \mathfrak h}\cdot e^{-\beta(x_3-t_2)}  &  t_2<x_3\leq t_2+\mathfrak h,\\
	1 & x_3 >t_2+\mathfrak h.
	\end{cases}
	\end{equation}
	Multiplying $\Phi(\zeta(x_3,t_1,t_2,\beta,\mathfrak h)-1)$ on both sides of \eqref{uniquenesseq} and taking integral on $\Omega(t_1-\mathfrak h,t_2+\mathfrak h)$ yield
	\begin{equation}
	\begin{split}
	&\quad\int_{\Omega(t_1-\mathfrak h,t_2+\mathfrak h)}\mathfrak a_{ij}\partial_i\Phi\partial_j\Phi(\zeta-1)dx+\int_{\Omega(t_1-\mathfrak h,t_2+\mathfrak h)}\mathfrak a_{i3}\partial_i\Phi\Phi\partial_3\zeta dx\\&=\int_{\partial\Omega\cap\overline{\Omega(t_1-\mathfrak h,t_2+\mathfrak h)}}\mathfrak a_{ij}\partial_j\Phi\Phi(\zeta-1)n_ids.
	\end{split}
	\end{equation}
	For the boundary term, one has
	\begin{equation}\label{unique boundary}
	\mathfrak a_{ij}\partial_j\Phi n_i=\big(\hat\rho(|\nabla\varphi^\epsilon_1|^2,\phi)\partial_i\varphi^\epsilon_1-\hat\rho(|\nabla\varphi^\epsilon_2|^2,\phi)\partial_i\varphi^\epsilon_2\big)\cdot n_i=0.
	\end{equation}
	Moreover, the conserved mass flux on each cross section implies
	\begin{equation}\label{unique cross section}
	\int_{\Sigma_t}\mathfrak a_{i3}\partial_i\Phi dx'=\int_{\Sigma_t}\hat\rho(|\nabla\varphi^\epsilon_1|^2,\phi)\partial_3\varphi^\epsilon_1-\hat\rho(|\nabla\varphi^\epsilon_2|^2,\phi)\partial_3\varphi^\epsilon_2dx'=0.
	\end{equation}
	Set $\tilde\eta(t)=\int_{\Sigma_t}\Phi dx'$.
	Combining \eqref{unique boundary} and \eqref{unique cross section} yields that
	\begin{equation}\begin{aligned}
	&\quad\lambda\int_{\Omega(t_1 -\mathfrak h,t_2+\mathfrak h)}|\nabla\Phi|^2(\zeta-1)dx\\
	\leq &-\int\limits_{[t_1-\mathfrak h,t_1]\cup[t_2,t_2+\mathfrak h]}\bigg(\frac{\tilde\eta(x_3)}{|\Sigma_{x_3}|}\partial_3\zeta\int_{\Sigma_{x_3}}\mathfrak a_{i3}\partial_i\Phi dx'\bigg)dx_3\\
	&-\int_{\Omega(t_1-\mathfrak h,t_1)\cup\Omega(t_2,t_2+\mathfrak h)}\mathfrak a_{i3}\partial_i\Phi\bigg(\Phi-\frac{\tilde\eta(x_3)}{|\Sigma_{x_3}|}\bigg)
	\partial_3\zeta dx\\
	\leq& \bigg(\int_{\Omega(t_1-\mathfrak h,t_1)\cup\Omega(t_2,t_2+\mathfrak h)}\bigg(\Phi-\frac{\tilde\eta(x_3)}{|\Sigma_{x_3}|}\bigg)^2(\partial_3\zeta)^2\zeta^{-1}dx\bigg)^{\frac{1}{2}}\bigg(\int_{\Omega(t_1-\mathfrak h,t_1)\cup\Omega(t_2,t_2+\mathfrak h)}(\mathfrak a_{i3}\partial_i\Phi)^2\zeta dx\bigg)^{\frac{1}{2}}.
	\end{aligned}\end{equation}
	It follows from \eqref{Poincare_inequality} that one has
	\begin{equation}\begin{split}
	&\quad\bigg(\int_{\Omega(t_1-\mathfrak h,t_1)\cup\Omega(t_2,t_2+\mathfrak h)}\bigg(\Phi-\frac{\tilde\eta(x_3)}{|\Sigma_{x_3}|}\bigg)^2(\partial_3\zeta)^2\zeta^{-1}dx\bigg)^{\frac{1}{2}}\\
	&=\bigg\{\int_{t_1-\mathfrak h}^{t_1}+\int_{t_2}^{t_2+\mathfrak h}(\partial_3\zeta)^2\zeta^{-1}\bigg[\int_{\Sigma_{x_3}}\bigg(\Phi-\frac{\tilde\eta(x_3)}{|\Sigma_{x_3}|}\bigg)^2dx'\bigg]dx_3\bigg\}^\frac{1}{2}\\
	&\leq\bigg[\int_{t_1-\mathfrak h}^{t_1}+\int_{t_2}^{t_2+\mathfrak h}\Lambda^2(\partial_3\zeta)^2\zeta^{-1}\bigg(\int_{\Sigma_{x_3}}|\nabla\Phi|^2dx'\bigg)dx_3\bigg]^\frac{1}{2}\\
	&\leq\bigg(\int_{\Omega(t_1-\mathfrak h,t_1)\cup\Omega(t_2,t_2+\mathfrak h)}\Lambda^2(\partial_3\zeta)^2\zeta^{-1}|\nabla\Phi|^2dx\bigg)^{\frac{1}{2}}.
	\end{split}\end{equation}
	Note that $\partial_3\zeta=\beta\zeta$ for $x_3\in[t_1-\mathfrak h,t_1]\cup[t_2,t_2+\mathfrak h]$, then
	\begin{equation}\label{Phit1t2}
	\lambda\int_{\Omega(t_1-\mathfrak h,t_2+\mathfrak h)}|\nabla\Phi|^2(\zeta-1)dx\leq\Lambda^2\beta\int_{\Omega(t_1\mathfrak h,t_1)\cup\Omega(t_2,t_2+\mathfrak h)}|\nabla\Phi|^2\zeta dx.
	\end{equation}
	Set $\beta=\frac{\lambda}{\Lambda^2 }$. For any $T>0$, let  $t_1=-T$, $t_2=T$ and $\mathfrak h=T$. Hence,
	\begin{equation}
	\int_{\Omega(-T,T)}|\nabla\Phi|^2dx\leq Ce^{-\beta T}\int_{\Omega(-2T,2T)}|\nabla\Phi|^2dx\leq 2C\overline{S}Te^{-\beta T}\max|\nabla\Phi|^2.
	\end{equation}
	Let $T\rightarrow+\infty$ yields $|\nabla\Phi|=0$ $a.e.$  in $\Omega$.
		This finishes the proof of uniqueness of compressible case. One can follow the above steps to obtain the uniqueness of the incompressible case  where $\mathfrak a_{ij}=\delta_{ij}$.
\end{proof}

\section{ Low Mach Number limit}\label{low mach number limit}
This section devotes to the proof of  Theorem \ref{Theorem low Mach limit}.

In Lemma \ref{lemma_local estimate}, when $\epsilon_0$ and $\theta$ given, the solution $\varphi^\epsilon$ satisfies
\begin{equation}
|\nabla\varphi^\epsilon(x)|=|\epsilon^2\nabla\tilde\varphi^\epsilon(x)+\nabla\bar\varphi(x)|\leq\max|\nabla\bar\varphi(x)|+C(\epsilon_0,\theta)\epsilon^2.
\end{equation}
Hence, there exists a constant $\epsilon_{0,\theta}\in(0,\epsilon_0)$ such that  for any $0<\epsilon<\epsilon_{0,\theta}$, $ |\nabla\varphi^\epsilon(x,\epsilon_0,\theta)|<\mathring q^{\epsilon_0}_\theta(\phi)$.
Moreover, $\{\epsilon_{0,\theta}\}$ is a non-decreasing sequence with respect to $\theta$ which has an upper bound $\epsilon_0$. Set
$\epsilon_{0,cr}=\sup\limits_{0<\theta<1}\epsilon_{0,\theta}$. Hence one has
\begin{equation}
|\nabla\varphi^\epsilon(x)|=|\epsilon^2\nabla\tilde\varphi^\epsilon(x)+\nabla\bar\varphi(x)|\leq\max|\nabla\bar\varphi(x)|+C(\epsilon_0,\theta)\epsilon^2<\mathring q^{\epsilon_0}_{cr}(\phi).
\end{equation} This implies $M^\epsilon(\phi)<1$. Therefore,  $\varphi^\epsilon$ is the solution of \eqref{problem C2}.
Denote $\epsilon_c=\sup\limits_{0<\epsilon_0<1}\epsilon_{0,cr}$. Obviously, for any $\epsilon\in(0,\epsilon_c)$, one has $0<M^\epsilon(\phi)<1$ and
\begin{equation}
\nabla\varphi^\epsilon=\nabla\bar\varphi+\epsilon^2\nabla\tilde\varphi^\epsilon
\quad\text{or} \quad
{\bf{u}^\epsilon}=\bar{\bf{u}}+\epsilon^2\tilde{\bf u}.
\end{equation}
For convergence rates of density and pressure, the proof is the same to \cite{Zhang-Wang} and we omit the details. This completes the proof of Theorem \ref{Theorem low Mach limit}.

\section{Convergence rate of the velocity field}\label{secrate}

In this section, we investigate the convergence rates of the flows at far fields if the boundary of nozzle $\Omega$ tends to a perfect cylinder. First,  the $L^2$ convergence is obtained by energy estimate. $L^\infty$ convergence is established via Nash-Moser iteration. The proof is divided into four steps.
\subsection{Energy estimate for the case where the nozzle boundary satisfies \eqref{Icylindercase}} Let $\varphi^\epsilon$ be the uniformly subsonic solution of \eqref{problem C2}, whose existence and uniqueness are established in Theorem \ref{Theorem low Mach limit}.  Let $\varphi_*$ satisfy \eqref{cylinderproblem} and $\bar\varphi_*=\bar qx_3$ satisfy \eqref{limitcylinderproblem}.  Since the boundary is actually the cylinder, the desired convergence rates of the velocity field is exponential.

Denote $\Psi=\varphi^\epsilon-\varphi_{*}$. The straightforward computations yield that $\Psi$ satisfies
\begin{equation}\label{cylinderPsieq}
\begin{cases}
\partial_i(a_{ij}\partial_j\Psi)=0, &\text{in} \ \Omega\cap \{x_3>K\}, \\
\frac{\partial\Psi}{\partial \mathbf n}=0,  &\text{on}\ \partial\Omega\cap \{x_3>K\},
\end{cases}
\end{equation}
where \begin{equation}
a_{ij}=\int_{0}^1\rho^\epsilon(\hat q^2,\phi)\delta_{ij}+2\rho^\epsilon_\mathcal G(\hat q^2,\phi)(s\partial_i\varphi^\epsilon+(1-s)\partial_i\varphi_*))(s\partial_j\varphi^\epsilon+(1-s)\partial_j\varphi_{*})ds
\end{equation}
with
\begin{equation}
\hat q^2=|s\nabla\varphi^\epsilon+(1-s)\nabla\varphi_*|^2.
\end{equation}
Obviously, $a_{ij}$ satisfies \eqref{aij_elliptic}.
Let $\zeta(x_3,t_1,t_2,\beta,\mathfrak h)$ be the function defined in \eqref{zeta}. Multiplying $\Psi(\zeta-1)$ on both sides of the equation on \eqref{cylinderPsieq} as same as the estimates \eqref{Phit1t2} one has
\begin{equation}
e^{\beta \mathfrak h}\int_{\Omega(t_1,t_2)}|\nabla\Psi|^2dx\leq\int_{\Omega(t_1-\mathfrak h,t_2+\mathfrak h)}|\nabla\Phi|^2dx.
\end{equation}
Choosing $t_1=T$, $t_2=T+1$ and $\mathfrak h=\frac{T}{2}$ yields
\begin{equation}
e^{\beta T}\int_{\Omega(T,T+1)}|\nabla\Psi|^2dx\leq\int_{\Omega(\frac{T}{2},\frac{3T}{2}+1)}|\nabla\Psi|^2dx\leq C(T+1).
\end{equation}
If $T>0$ is large enough, then there exists a constant $\bar \alpha$ such that
\begin{equation}
\int_{\Omega(T,T+1)}|\nabla\Psi|^2dx\leq{e^{-\bar\alpha T}}.
\end{equation}
For the incompressible flows, if the boundary satisfies \eqref{Icylinder boundary}, then $\overline\Psi=\bar\varphi-qx_3$ satisfies
\begin{equation}\label{IcylinderPsieq}
\begin{cases}
\triangle\overline\Psi=0, &\text{in} \ \Omega\cap \{x_3>K\}, \\
\frac{\partial\overline\Psi}{\partial \mathbf n}=0,  &\text{on}\ \partial\Omega\cap \{x_3>K\}.
\end{cases}
\end{equation} Following the same arguments with $a_{ij}=\delta_{ij}$ in \eqref{cylinderPsieq} to get the exponential decay, $i.e.$, \begin{equation}
\int_{\Omega(T,T+1)}|\nabla\overline\Psi|^2dx\leq{e^{-\bar\alpha_1 T}},
\end{equation}
where $\bar\alpha_1$ is a positive constant.
\subsection{The energy estimate when the nozzle boundary satisfies \eqref{Iboundary_algebratic_rateii} and the conservative force satisfies \eqref{force algebratic rate}}\label{2} In this case, since the decay rates of boundary and external force is algebraic, the one cannot expect the convergence rates are exponential. Actually, the $L^2$ decay can only be algebraically fast.

Obviously, the unit outer normal of $\partial \Omega$ is
\begin{equation*}
{\bf{n}}=(n_1,n_2,n_3)=\frac{1}{\sqrt{G}}\bigg(\cos\tau+\frac{\partial f_1}{\partial\tau}\frac{\sin\tau}{r}, \sin\tau-\frac{\partial f_1}{\partial\tau}\frac{\cos\tau}{r},-\frac{\partial f_1}{\partial x_3}\bigg),
\end{equation*}
where \begin{equation*}G=1+\bigg(\frac{\partial f_1}{\partial \tau}\bigg)^2\frac{1}{r^2}+\bigg(\frac{\partial f_1}{\partial x_3}\bigg)^2.\end{equation*}Denote $\mathcal U=\varphi^\epsilon-\bar\varphi_*$. Then $\mathcal U$ satisfies
\begin{equation}\label{mathcalU}
\begin{cases}
\partial_i(a_{ij}\partial_j\mathcal U+b_i)=0, &\text{in} \ \Omega\cap \{x_3>K\}, \\
\frac{\partial\mathcal U}{\partial \mathbf n}=-\bar qn_3,  &\text{on}\ \partial\Omega\cap \{x_3>K\},
\end{cases}
\end{equation}
where \begin{equation}
a_{ij}=\int_{0}^1\rho^\epsilon(\mathfrak q^2,\mathcal E)\delta_{ij}+2\rho^\epsilon_\mathcal G(\mathfrak q^2,\mathcal E)(s\partial_i\varphi^\epsilon+(1-s)\partial_i\bar\varphi_*))(s\partial_j\varphi^\epsilon+(1-s)\partial_j\bar\varphi_*)ds
\end{equation}
and
\begin{equation}
b_i=\int_0^1\rho^\epsilon_\phi(\mathfrak q^2,\mathcal E )(\phi-\bar\phi)\big(s\partial_i\varphi^\epsilon+(1-s)\partial_i\bar\varphi_*\big)ds
\end{equation}
with
\begin{equation}
\mathfrak q^2=|s\nabla\varphi^\epsilon+(1-s)\nabla\bar\varphi_*|^2\quad\text{and}\quad\mathcal E=s\phi+(1-s)\bar\phi.
\end{equation}
And $a_{ij}$ satisfies \eqref{aij_elliptic}.
The straightforward computations yield that \begin{equation}\label{bi}
|b_i|_{C^1(\Sigma_{x_3})}\leq\frac{C}{x_3^{\mathfrak b_1}}.
\end{equation}
In fact, it follows from \eqref{Iboundary_algebratic_rateii} that one has
\begin{equation}\label{aijni}
\begin{split}
&\quad(a_{ij}\partial_j\mathcal U+b_i)n_i=\big(\rho^\epsilon(|\nabla\varphi^\epsilon|^2,\phi)\partial_i\varphi^\epsilon-\rho^\epsilon(|\nabla\bar\varphi_*|^2,\bar\phi)\partial_i\bar\varphi_*\big)n_i\\&=-\rho^\epsilon(|\bar q|^2,\bar\phi)\bar qn_3\leq \frac{C}{x_3^{\mathfrak a_1+1}}.
\end{split}\end{equation}
Moreover, on each cross section, one has
\begin{equation}\label{crosssectionestimate}\begin{split}
&\quad\int_{\Sigma_{x_3}}a_{i3}\partial_i{\mathcal U}+b_3dx'=\int_{\Sigma_{x_3}}\rho^\epsilon(|\nabla\varphi^\epsilon|^2,\phi)\partial_3\varphi^\epsilon-\rho^\epsilon(|\nabla\bar\varphi_*|^2,\bar\phi)\partial_3\bar\varphi_*dx'\\&\leq\int_{\Sigma_{x_3}}\rho^\epsilon(|\nabla\varphi^\epsilon|^2,\phi)\partial_3\varphi^\epsilon dx'-\int_{\Sigma_{x_3}}\rho^\epsilon(|\nabla\bar\varphi_*|^2,\bar\phi)\partial_3\bar\varphi_*dx'\\\quad&\quad+\int_{B_1(0)}\rho^\epsilon(|\nabla\bar\varphi_*|^2,\bar\phi)\partial_3\bar\varphi_*dx'-\int_{B_1(0)}\rho^\epsilon(|\nabla\bar\varphi_*|^2,\bar\phi)\partial_3\bar\varphi_*dx'\\&\leq C\bigg||\Sigma_{x_3}|-|B_1(0)|\bigg|\leq Cx_3^{-\mathfrak a_1}.\end{split}
\end{equation}

Choosing $t_1=T$ and $t_2=t_1+\bar K$ with $\bar K$ a positive integer to be determined later. Let $\zeta(x_3,t_1,t_2,\hat\beta,1)$ be the function define in \eqref{zeta} with $\mathfrak h=1$ and $\hat\beta$ to be determined later. Denote
\begin{equation}
s_1=\dashint_{\Omega(t_1-1,t_1)}\mathcal Udx\quad\text{and}\quad s_2=\dashint_{\Omega(t_2,t_2+1)}\mathcal Udx.
\end{equation}
Let
\begin{equation*}
\hat{\mathcal U}(x;t_1,t_2,s_1,s_2)=\left\{
\begin{array}{llr}
\mathcal U(x)-s_1,  & {x_3<t_1,}\\
\mathcal U(x)-s_1-\frac{s_2-s_1}{t_2-t_1}(x_3-t_1), & {t_1\leq x_3\leq t_2,}\\
\mathcal U(x)-s_2,  & {x_3>t_2}.
\end{array} \right.
\end{equation*}
Multiplying $\hat{\mathcal U}(\zeta-1)$ on both sides of \eqref{mathcalU} and integrating on $\Omega(t_1-1,t_2+1)$ yield that
\begin{equation}
\begin{split}
&\quad\int_{\overline{\Omega(t_1-1,t_2+1)}\cap\partial\Omega}(a_{ij}\partial_i\mathcal U+b_i)n_i\hat{\mathcal U}(\zeta-1)ds\\&=\int_{\Omega(t_1-1,t_2+1)}(a_{i3}\partial_i{\mathcal U}+b_3)\hat{\mathcal U}\partial_3\zeta dx+\int_{\Omega(t_1-1,t_2+1)}a_{ij}\partial_i\mathcal U\partial_j\mathcal U(\zeta-1)dx\\\quad&\quad+\int_{\Omega(t_1-1,t_2+1)}b_i(\zeta-1)\partial_i\mathcal U dx-\int_{\Omega(t_1,t_2)}(a_{i3}\partial_i\mathcal U+b_3)(\zeta-1)\frac{s_2-s_1}{t_2-t_1}dx,
\end{split}
\end{equation}
where $\zeta-1=0$ at $x_3=t_1-1$ and $x_3=t_2+1$ has been used. Then the direct calculations give
\begin{equation}\begin{split}
&\lambda\int_{\Omega(t_1-1,t_2+1)}|\nabla\mathcal U|^2(\zeta-1)dx\\
\leq&-\int_{\Omega(t_1-1,t_1)}(a_{i3}\partial_i{\mathcal U}+b_3)\hat{\mathcal U}\partial_3\zeta dx-\int_{\Omega(t_2,t_2+1)}(a_{i3}\partial_i{\mathcal U}+b_3)\hat{\mathcal U}\partial_3\zeta dx\\
\quad&-\int_{\Omega(t_1-1,t_2+1)}b_i(\zeta-1)\partial_i\mathcal U dx+\int_{\Omega(t_1,t_2)}(a_{i3}\partial_i\mathcal U+b_3)(\zeta-1)\frac{s_2-s_1}{t_2-t_1}dx\\\quad&+\int_{\overline{\Omega(t_1-1,t_2+1)}\cap\partial\Omega}(a_{ij}\partial_i\mathcal U+b_i)n_i\hat{\mathcal U}(\zeta-1)ds=\sum_{k=1}^5I_k.
\end{split}\end{equation}
 Now  $I_k\ (k=1,2\cdots,5)$ can be estimated one by one. Set \begin{equation*}
\chi_1(x_3)=\int_{\Sigma_{x_3}}(\mathcal U-s_1)dx'\quad\text{and}\quad\chi_2(x_3)=\int_{\Sigma_{x_3}}(\mathcal U-s_2)dx'.
\end{equation*}
The straightforward computations yield that
\begin{equation}
\begin{split}
|I_1|&\leq\bigg|\int_{\Omega(t_1-1,t_1)}(a_{i3}\partial_i\mathcal U+b_3)\partial_3\zeta\bigg(\hat{\mathcal U}-\frac{\chi_1(x_3)}{|\Sigma_{x_3}|}\bigg)dx+\int_{\Omega(t_1-1,t_1)}(a_{i3}\partial_i\mathcal U+b_3)\partial_3\zeta\frac{\chi_1(x_3)}{|\Sigma_{x_3}|}dx\bigg|\\
&\leq\bigg|\int_{\Omega(t_1-1,t_1)}a_{i3}\partial_i\mathcal U\partial_3\zeta\bigg(\hat{\mathcal U}-\frac{\chi_1(x_3)}{|\Sigma_{x_3}|}\bigg)dx\bigg|+\bigg|\int_{\Omega(t_1-1,t_1)}b_3\partial_3\zeta\bigg(\hat{\mathcal U}-\frac{\chi_1(x_3)}{|\Sigma_{x_3}|}\bigg)dx\bigg|\\
\quad&\quad+\bigg|\int_{\Omega(t_1-1,t_1)}(a_{i3}\partial_i\mathcal U+b_3)\partial_3\zeta\frac{\chi_1(x_3)}{|\Sigma_{x_3}|}dx\bigg|= \sum_{k=1}^{3}I_{1k}.
\end{split}\end{equation}
Noting that $(\partial_3\zeta)^2=\hat\beta^2\zeta^2$ when $x_3\in[t_1-\mathfrak h,t_1]\cup[t_2,t_2+\mathfrak h]$. It follows from the H$\ddot {\text o}$lder inequality and Poincar$\acute{e}$ inequality \eqref{Poincare_inequality} that one has
\begin{equation}\label{pi1}\begin{split}
|I_{11}|&\leq\bigg[\int_{\Omega(t_1-1,t_1)}(a_{i3}\partial_i\mathcal U)^2\zeta dx\bigg]^\frac{1}{2}\cdot\bigg[\int_{\Omega(t_1-1,t_1)}\bigg(\hat{\mathcal U}-\frac{\chi_1(x_3)}{|\Sigma_{x_3}|}\bigg)^2(\partial_3\zeta)^2\zeta^{-1}dx\bigg]^\frac{1}{2}\\
&\leq\bigg[\int_{\Omega(t_1-1,t_1)}\Lambda^2|\nabla\mathcal U|^2\zeta dx\bigg]^\frac{1}{2}\cdot\bigg[\int_{t_1-1}^{t_1}\int_{\Sigma_{x_3}}\bigg(\hat{\mathcal U}-\frac{\chi_1(x_3)}{|\Sigma_{x_3}|}\bigg)^2dx'(\partial_3\zeta)^2\zeta^{-1} dx_3\bigg]^\frac{1}{2}
\\
&\leq\bigg[\int_{\Omega(t_1-1,t_1)}\Lambda^2|\nabla\mathcal U|^2\zeta dx\bigg]^\frac{1}{2}\cdot\bigg[\int_{t_1-1}^{t_1}\int_{\Sigma_{x_3}}\Lambda^2|\nabla\mathcal U|^2dx'(\partial_3\zeta)^2\zeta^{-1} dx_3\bigg]^\frac{1}{2}\\
&\leq\Lambda^2\hat\beta\int_{\Omega(t_1-1,t_1)}|\nabla\mathcal U|^2\zeta dx.
\end{split}\end{equation}
Using \eqref{force algebratic rate} gives
\begin{equation}\label{pi2}
\begin{split}
|I_{12}|&\leq\bigg(\int_{\Omega(t_1-1,t_1)}b_3^2(\partial_3\zeta)^2dx\bigg)^\frac{1}{2}\cdot\bigg(\int_{t_1-1}^{t_1}\int_{\Sigma_{x_3}}\bigg(\hat{\mathcal U}-\frac{\chi_1(x_3)}{|\Sigma_{x_3}|}\bigg)^2dx' dx_3\bigg)^\frac{1}{2}\\
&\leq\frac{C\Lambda}{(t_1-1)^{\mathfrak b_1}}\bigg(\frac{\hat\beta}{2}\big(e^{2\hat\beta}-1\big)\bigg)^\frac{1}{2}\bigg(\int_{\Omega(t_1-1,t_1)}|\nabla\mathcal U|^2dx\bigg)^\frac{1}{2}\\
&\leq\frac{C\hat\beta\big(e^{2\hat\beta}-1\big)}{\epsilon_1(t_1-1)^{2\mathfrak b_1}}+\frac{\epsilon_1}{2}\int_{\Omega(t_1-1,t_1)}|\nabla\mathcal U|^2dx,
\end{split}\end{equation}
where $\epsilon_1$ is a small positive constant to be determined later. Applying the estimate \eqref{crosssectionestimate} yields
\begin{equation}\label{pi3}
\begin{split}
|I_{13}|&\leq\bigg|\int_{t_1-1}^{t_1}\bigg(\int_{\Sigma_{x_3}}(a_{i3}\partial_i\mathcal U+b_3)dx'\bigg)\partial_3\zeta\frac{\chi_1(x_3)}{|\Sigma_{x_3}|}dx_3\bigg|\\
&\leq\bigg[\int_{t_1-1}^{t_1}\bigg(\int_{\Sigma_{x_3}}(a_{i3}\partial_i\mathcal U+b_3)dx'\frac{\partial_3\zeta}{|\Sigma_{x_3}|}\bigg)^2dx_3\bigg]^\frac{1}{2}\cdot\bigg[\int_{t_1-1}^{t_1}\chi_1^2(x_3)dx_3\bigg]^\frac{1}{2}\\
&\leq\frac{C}{(t_1-1)^\mathfrak {a_1}}\bigg(\frac{\hat\beta}{2}\big(e^{2\hat\beta}-1\big)\bigg)^\frac{1}{2}\bigg[\int_{t_1-1}^{t_1}\bigg(\int_{\Sigma_{x_3}}(\mathcal U-s_1)^2dx'\bigg)dx_3\bigg]^\frac{1}{2}\\
&\leq\frac{C}{(t_1-1)^\mathfrak {a_1}}\bigg(\frac{\hat\beta}{2}\big(e^{2\hat\beta}-1\big)\bigg)^\frac{1}{2}\bigg(\int_{\Omega(t_1-1,t_1)}|\nabla\mathcal U|^2dx\bigg)^\frac{1}{2}\\
&\leq\frac{C\hat\beta\big(e^{2\hat\beta}-1\big)}{\epsilon_1(t_1-1)^{2\mathfrak a_1}}+\frac{\epsilon_1}{2}\int_{\Omega(t_1-1,t_1)}|\nabla\mathcal U|^2dx.
\end{split}
\end{equation}
Therefore, combining \eqref{pi1}, \eqref{pi2}, and \eqref{pi3} together shows
\begin{equation}\label{I1}
|I_1|\leq\Lambda^2\hat\beta\int_{\Omega(t_1-1,t_1)}|\nabla\mathcal U|^2\zeta dx+ \frac{C\hat\beta\big(e^{2\hat\beta}-1\big)}{\epsilon_1(t_1-1)^{2\mathfrak a_1}}+\frac{C\hat\beta\big(e^{2\hat\beta}-1\big)}{\epsilon_1(t_1-1)^{2\mathfrak b_1}}+\epsilon_1\int_{\Omega(t_1-1,t_1)}|\nabla\mathcal U|^2dx.
\end{equation}
Similarly, one has
\begin{equation}\label{I2}
|I_2|\leq\Lambda^2\hat\beta\int_{\Omega(t_2,t_2+1)}|\nabla\mathcal U|^2\zeta dx+ \frac{C\hat\beta\big(e^{2\hat\beta}-1\big)}{\epsilon_1(t_2)^{2\mathfrak a_1}}+\frac{C\hat\beta\big(e^{2\hat\beta}-1\big)}{\epsilon_1(t_2)^{2\mathfrak b_1}}+\epsilon_1\int_{\Omega(t_2,t_2+1)}|\nabla\mathcal U|^2dx.
\end{equation}
For $I_3$, it follows from \eqref{force algebratic rate} that one has
\begin{equation}\label{I3}
\begin{split}
|I_3|&\leq\frac{Ce^{\hat\beta}}{(t_1-1)^{\mathfrak b_1}}\int_{\Omega(t_1-1,t_2+1)}|\nabla\mathcal U|dx\\&\leq\frac{Ce^{\hat\beta}}{(t_1-1)^{\mathfrak b_1}}|\Omega(t_1-1,t_2+1)|^\frac{1}{2}\bigg(\int_{\Omega(t_1-1,t_2+1)}|\nabla\mathcal U|^2dx\bigg)^\frac{1}{2}\\&\leq\frac{Ce^{2\hat\beta}(t_2-t_1+2)}{\epsilon_1(t_1-1)^{2\mathfrak b_1}}+\frac{\epsilon_1}{2}\int_{\Omega(t_1-1,t_2+1)}|\nabla\mathcal U|^2dx.
\end{split}
\end{equation}
It follows from \eqref{crosssectionestimate} that
\begin{equation}\label{I4'}\begin{split}
|I_4|&=\bigg|\frac{s_2-s_1}{t_2-t_1}\int_{t_1}^{t_2}\bigg(\int_{\Sigma_{x_3}}a_{i3}\partial_i\mathcal U+b_3dx'\bigg)(\zeta-1)dx_3\bigg|\leq\frac{Ce^{\hat\beta}}{t_1^{\mathfrak a_1}}|s_2-s_1|.
\end{split}\end{equation}
With the aid of estimate \eqref{local estimate}, one has
\begin{equation}\label{I4}
|I_4|\leq \frac{Ce^{\hat\beta}}{t_1^{\mathfrak a_1}}\int_{\Omega(t_1-1,t_2+1)}|\nabla\mathcal U|dx\leq\frac{Ce^{2\hat\beta}(t_2-t_1+2)}{\epsilon_1t_1^{2\mathfrak a_1}}+\frac{\epsilon_1}{2}\int_{\Omega(t_1-1,t_2+1)}|\nabla\mathcal U|^2dx.
\end{equation}

For $I_5$, it follows from \eqref{aijni} that one has
\begin{equation}\label{I5'}
\begin{split}
|I_5|&\leq\frac{Ce^{\hat\beta}}{(t_1-1)^{\mathfrak a_1+1}}\int_{\partial\Omega\cap\overline{\Omega(t_1-1,t_2+1)}}\hat{\mathcal U}ds\\
&\leq\frac{Ce^{\hat\beta}}{(t_1-1)^{\mathfrak a_1+1}}\sum_{i=0}^{\bar K+1}\int_{\partial\Omega\cap\overline{\Omega(t_1-1+i,t_1+i)}}\hat{\mathcal U}ds\\
&\leq\frac{Ce^{\hat\beta}}{(t_1-1)^{\mathfrak a_1+1}}\sum_{i=0}^{\bar K+1}\bigg(\int_{\Omega(t_1-1+i,t_1+i)}|\nabla\hat{\mathcal U}|+|\hat{\mathcal U}|dx\bigg)\\
&\leq\frac{Ce^{\hat\beta}}{(t_1-1)^{\mathfrak a_1+1}}\bigg(\int_{\Omega(t_1-1,t_2+1)}|\nabla\mathcal U|dx+|s_2-s_1|+\int_{\Omega(t_1-1,t_2+1)}|\hat{\mathcal U}|dx\bigg).
\end{split}\end{equation}Define
\begin{equation}
\mathcal D_i=\dashint_{\Omega(t_1+i,t_1+i+1)}\mathcal U dx,\quad i=0,1,\cdots,\bar K-1.
\end{equation}
As same as the estimate \eqref{local estimate}, one has
\begin{equation}\label{Di-s1}
|\mathcal D_i-s_1|\leq\int_{\Omega(t_1-1,t_1+i+1)}|\nabla\mathcal U|dx.
\end{equation}
Therefore, the direct computations give
\begin{equation}\label{U-s1}
\begin{split}
&\quad\int_{\Omega(t_1,t_2)}|\mathcal U-s_1|dx\\
&\leq\sum_{j=0}^{\bar K-1}\int_{\Omega(t_1+j,t_1+j+1)}|\mathcal U-\mathcal D_j|+|\mathcal D_j-s_1|dx\\
&\leq C\sum_{j=0}^{\bar K-1}\int_{\Omega(t_1+j,t_1+j+1)}|\nabla\mathcal U|dx+\sum_{j=0}^{\bar K-1}\int_{\Omega(t_1+j,t_1+j+1)}\bigg(\int_{\Omega(t_1-1,t_1+j+1)}|\nabla\mathcal U|dx\bigg)dx\\
&\leq C\sum_{j=0}^{\bar K-1}\int_{\Omega(t_1+j,t_1+j+1)}|\nabla\mathcal U|dx+C\sum_{j=0}^{\bar K-1}\int_{\Omega(t_1-1,t_2+1)}|\nabla\mathcal U|dx\\
&\leq C(t_2-t_1+2)\int_{\Omega(t_1-1,t_2+1)}|\nabla\mathcal U|dx.
\end{split}
\end{equation}
This yields
\begin{equation}\label{I5''}
\begin{split}
&\quad\int_{\Omega(t_1-1,t_2+1)}|\hat{\mathcal U}|
dx=\int_{\Omega(t_1-1,t_1)}|\hat{\mathcal U}| dx+\int_{\Omega(t_2,t_2+1)}|\hat{\mathcal U}| dx+\int_{\Omega(t_1,t_2)}|\hat{\mathcal U}| dx\\[2mm]&
\leq C\bigg(\int_{\Omega(t_1-1,t_1)\cup\Omega(t_2,t_2+1)}|\nabla{\mathcal U}| dx\bigg)+\int_{\Omega(t_1,t_2)}\bigg|{\mathcal U}-s_1-\frac{s_2-s_1}{t_2-t_1}(x_3-t_1)\bigg|dx\\&
\leq C\bigg(\int_{\Omega(t_1-1,t_1)\cup\Omega(t_2,t_2+1)}|\nabla{\mathcal U}| dx\bigg)+|s_2-s_1|\int_{t_1}^{t_2}\int_{\Sigma_{x_3}}\frac{x_3-t_1}{t_2-t_1}dx'dx_3+\int_{\Omega(t_1,t_2)}|{\mathcal U}-s_1| dx\\
&\leq C\bigg(\int_{\Omega(t_1-1,t_1)\cup\Omega(t_2,t_2+1)}|\nabla{\mathcal U}| dx\bigg)+C(t_2-t_1)|s_2-s_1|+C(t_2-t_1+2)\int_{\Omega(t_1-1,t_2+1)}|\nabla{\mathcal U}|dx\\
&\leq C(t_2-t_1+2)\int_{\Omega(t_1-1,t_2+1)}|\nabla{\mathcal U}|dx.
\end{split}
\end{equation}
Therefore, the estimate \eqref{I5'}, together with \eqref{I5''}, implies
\begin{equation}\label{I5}\begin{split}
\quad|I_5|&\leq\frac{Ce^{\hat\beta}(t_2-t_1+2)}{(t_1-1)^{\mathfrak a_1+1}}\int_{\Omega(t_1-1,t_2+1)}|\nabla\mathcal U|dx\\
&\leq\frac{Ce^{\hat\beta}(t_2-t_1+2)}{(t_1-1)^{\mathfrak a_1+1}}|\Omega(t_1-1,t_2+1)|^\frac{1}{2}\bigg(\int_{\Omega(t_1-1,t_2+1)}|\nabla\mathcal U|^2dx\bigg)^\frac{1}{2}\\
&\leq \frac{Ce^{2\hat\beta}(t_2-t_1+2)^3}{\epsilon_1(t_1-1)^{2\mathfrak a_1+2}}+\frac{\epsilon_1}{2}\int_{\Omega(t_1-1,t_2+1)}|\nabla\mathcal U|^2dx.
\end{split} \end{equation}

Collecting \eqref{I1}, \eqref{I2}, \eqref{I3}, \eqref{I4}, and \eqref{I5} together gives
\begin{equation}\label{I}\begin{split}
&\quad\lambda\int_{\Omega(t_1-1,t_2+1)}|\nabla\mathcal U|^2(\zeta-1)dx\\
&\leq \Lambda^2\hat\beta\int_{\Omega(t_1-1,t_1)\cup\Omega(t_2,t_2+1)}|\nabla\mathcal U|^2\zeta dx+\epsilon_1\int_{\Omega(t_1-1,t_1)\cup\Omega(t_2,t_2+1)}|\nabla\mathcal U|^2dx\\[2mm]&\quad+\frac{3\epsilon_1}{2}\int_{\Omega(t_1-1,t_2+1)}|\nabla\mathcal U|^2dx+\frac{C\hat{\beta}(e^{2\hat\beta}-1)}{\epsilon_1(t_1-1)^{2\mathfrak a_1}}+\frac{C\hat{\beta}(e^{2\hat\beta}-1)}{\epsilon_1(t_1-1)^{2\mathfrak b_1}}+\frac{C\hat{\beta}(e^{2\hat\beta}-1)}{\epsilon_1t_2^{2\mathfrak a_1}}\\[2mm]&\quad+\frac{C\hat{\beta}(e^{2\hat\beta}-1)}{\epsilon_1t_2^{2\mathfrak b_1}}+\frac{Ce^{2\hat\beta}(t_2-t_1+2)}{\epsilon_1(t_1-1)^{2\mathfrak b_1}}+\frac{Ce^{2\hat\beta}(t_2-t_1+2)}{\epsilon_1(t_1)^{2\mathfrak a_1}}+\frac{Ce^{2\hat\beta}(t_2-t_1+2)^3}{\epsilon_1(t_1-1)^{2\mathfrak a_1+2}}.
\end{split}
\end{equation}
Choosing $\hat\beta=\frac{\lambda}{\Lambda^2}$ and taking $\mathfrak b=\min(\mathfrak a_1,\mathfrak b_1)$ yield
\begin{equation}\begin{split}
&\quad\lambda e^{\hat\beta}\int_{\Omega(t_1,t_2)}|\nabla\mathcal U|^2dx\\&\leq (\lambda+3\epsilon_1)\int_{\Omega(t_1-1,t_2+1)}|\nabla\mathcal U|^2dx+\frac{C(t_2-t_1+2)}{\epsilon_1(t_1-1)^{2\mathfrak b}}+\frac{C(t_2-t_1+2)^3}{\epsilon_1(t_1-1)^{2\mathfrak b+2}},
\end{split}\end{equation}
where $C$ is a constant depending on $\hat\beta$.

If one chooses $\epsilon_1$ small enough such that $\frac{\lambda+3\epsilon_1}{\lambda e^{\hat\beta}}\leq\mathfrak c_0<1$, then it holds that
\begin{equation}
\int_{\Omega(t_1,t_2)}|\nabla\mathcal U|^2dx\leq \mathfrak c_0\int_{\Omega(t_1-1,t_2+1)}|\nabla\mathcal U|^2dx+\frac{C(t_2-t_1+2)}{(t_1-1)^{2\mathfrak b}}+\frac{C(t_2-t_1+2)^3}{(t_1-1)^{2\mathfrak b+2}}.
\end{equation}
Choosing $\bar K$ large enough such that $\mathfrak c_0\frac{t_2-t_1+2}{t_2-t_1}<\mathfrak c<1$ yields
\begin{equation}\label{t2t1}\begin{split}
&\quad\frac{1}{t_2-t_1}\int_{\Omega(t_1,t_2)}|\nabla\mathcal U|^2dx\\&\leq\mathfrak c\frac{1}{t_2-t_1+2}\int_{\Omega(t_1-1,t_2+1)}|\nabla\mathcal U|^2dx+\frac{C(t_2-t_1+2)}{(t_2-t_1)(t_1-1)^{2\mathfrak b}}+\frac{C(t_2-t_1+2)^3}{(t_2-t_1)(t_1-1)^{2\mathfrak b+2}}.
\end{split}\end{equation}

Let $J$ be an integer satisfying $\frac{T}{2}-1\leq J<\frac{T}{2}$. Denote $t_{1,i}=T-i$ and $t_{2,i}=T+\bar K+i\ (i=0,1\cdots J)$. If $T>0$ is large enough, one  has
\begin{equation}
\frac{t_{2,i}-t_{1,i}+2}{t_{1,i}-1}\leq C \quad\text{for $i=0,1\cdots J$}.
\end{equation}
Substituting $t_{1,i}$ and $t_{2,i}$ into \eqref{t2t1} yields
\begin{equation}\label{t2it1i}
\frac{1}{t_{2,i}-t_{1,i}}\int_{\Omega(t_{1,i},t_{2,i})}|\nabla\mathcal U|^2dx\leq\mathfrak c\frac{1}{t_{2,i}-t_{1,i}+2}\int_{\Omega(t_{2,i}-t_{1,i}+2)}|\nabla\mathcal U|^2dx+\frac{C}{(t_{1,i}-1)^{2\mathfrak b}}.
\end{equation}
Iterating \eqref{t2it1i} gives
\begin{equation}
\frac{1}{t_{2,0}-t_{1,0}}\int_{\Omega(t_{1,0},t_{2,0})}|\nabla\mathcal U|^2dx\leq \mathfrak c^{J}\frac{1}{t_{2,J}-t_{1,J}}\int_{\Omega(t_{1,J},t_{2,J})}|\nabla\mathcal U|^2dx+\sum_{j=0}^{J}\mathfrak c^j\frac{C}{(t_{1,j})^{2\mathfrak b}}.
\end{equation}
Since $|\nabla\mathcal U|$ is bounded and $T$ is large, one has
\begin{equation}
\frac{1}{\bar K}\int_{\Omega(T,T+\bar K)}|\nabla\mathcal U|^2dx\leq C\mathfrak c^{J}+\frac{C}{T^{2\mathfrak b}}\leq\frac{C}{T^{2\mathfrak b}}.
\end{equation}
Thus,
\begin{equation}\label{energyestimate2}
\int_{\Omega(T,T+1)}|\nabla\mathcal U|^2dx\leq\int_{\Omega(T,T+\bar K)}|\nabla\mathcal U|^2dx\leq \frac{C\bar K}{T^{2\mathfrak b}}\leq\frac{C}{T^{2\mathfrak b}}.
\end{equation}

For the incompressible flows, when the nozzle boundary satisfies \eqref{Iboundary_algebratic_rateii}, then $\bar{\mathcal U}=\bar\varphi-qx_3$ satisfies
\begin{equation}\label{ImathcalU}
\begin{cases}
\triangle\bar{\mathcal U}=0, &\text{in} \ \Omega\cap \{x_3>K\}, \\
\frac{\partial\bar{\mathcal U}}{\partial \mathbf n}=- qn_3,  &\text{on}\ \partial\Omega\cap \{x_3>K\}.
\end{cases}
\end{equation}
Let $a_{ij}=\delta_{ij}$ and $b_i=0$ in \eqref{mathcalU}. Then \eqref{aijni} and \eqref{crosssectionestimate} can be written as
\begin{equation}
\int_{\Sigma_{x_3}}\partial_{x_3}\bar{\mathcal U}dx'\leq Cx_3^{-\mathfrak a_1}\quad\text{and}\quad\bigg|\frac{\partial\bar{\mathcal U}}{\partial {\bf n}}\bigg|\leq Cx_3^{-\mathfrak a_1-1}\quad \text{for $x_3>K$.}
\end{equation}
Similar to the proof for estimate \eqref{energyestimate2}, one can  conclude that
\begin{equation}\label{Ienergyestimate2}
\int_{\Omega(T,T+1)}|\nabla\bar{\mathcal U}|^2dx\leq\frac{C}{T^{2\mathfrak a_1}}.
\end{equation}
\subsection{Energy estimate where the boundary of the nozzle satisfies \eqref{Iboundary_algebratic_rateii} with $\mathfrak a_1>1$} In this case, the  velocity at the downstream is not constant. Then convergence rates of the velocity at the boundary in normal direction is $O(x_3^{-\mathfrak a_1})$. In order to get the convergence rates of velocity, we ask $\mathfrak a_1>1$.

Ler $\varphi_*$ is the uniformly subsonic solution of \eqref{cylinderproblem}. Obviously, $\nabla\varphi_*$ is not a constant. Denote $ W=\varphi^\epsilon-\varphi_*$. Then $ W$ satisfies
\begin{equation}\label{W}
\begin{cases}
\partial_i(a_{ij}\partial_jW)=0, &\text{in} \ \Omega\cap \{x_3>K\}, \\
\frac{\partial W}{\partial \mathbf n}=-\nabla\varphi_*\cdot{\bf n},  &\text{on}\ \partial\Omega\cap \{x_3>K\},
\end{cases}
\end{equation}
where we abuse the notations \begin{equation}
a_{ij}=\int_{0}^1\rho^\epsilon(\bar{\mathfrak q}^2,\phi)\delta_{ij}+2\rho^\epsilon_\mathcal G(\bar{\mathfrak q}^2,\phi)(s\partial_i\varphi^\epsilon+(1-s)\partial_i\varphi_*))(s\partial_j\varphi^\epsilon+(1-s)\partial_j\varphi_*)ds
\end{equation}
with
\begin{equation}
\bar{\mathfrak q}^2=|s\nabla\varphi^\epsilon+(1-s)\nabla\varphi_*|^2.
\end{equation}
Obviously $a_{ij}$ satisfies \eqref{aij_elliptic}. On each cross section $\Sigma_{x_3}$, one has
\begin{equation}\label{crosssectionestimate'}\begin{split}
&\quad\int_{\Sigma_{x_3}}a_{i3}\partial_i{W}dx
=\int_{\Sigma_{x_3}}\rho^\epsilon(|\nabla\varphi^\epsilon|^2,\phi)\partial_3\varphi^\epsilon-\rho^\epsilon(|\nabla\varphi_*|^2,\phi)\partial_3\varphi_*dx'\\&=\int_{\Sigma_{x_3}}\rho^\epsilon(|\nabla\varphi^\epsilon|^2,\phi)\partial_3\varphi^\epsilon dx'-\int_{\Sigma_{x_3}}\rho^\epsilon(|\nabla\varphi_*|^2,\phi)\partial_3\varphi_*dx'\\\quad&\quad+\int_{B_1(0)}\rho^\epsilon(|\nabla\varphi_*|^2,\phi)\partial_3\varphi_*dx'-\int_{B_1(0)}\rho^\epsilon(|\nabla\varphi_*|^2,\phi)\partial_3\varphi_*dx'\\&\leq C\bigg||\Sigma_{x_3}|-|B_1(0)|\bigg|\leq Cx_3^{-\mathfrak a_1}.\end{split}
\end{equation}
Writing $\bar{\bf{n}}=(\cos\tau,\sin\tau,0)$, then $\nabla\varphi_*\cdot\bar{\bf{n}}=0$.
In fact, on the boundary, one has
\begin{equation}\label{aijni'}
\begin{split}
&\quad(a_{ij}\partial_jW)n_i=\big(\rho^\epsilon(|\nabla\varphi^\epsilon|^2,\phi)\nabla\varphi^\epsilon-\rho^\epsilon(|\nabla\varphi_*|^2,\phi)\nabla\varphi_*\big)\cdot{\bf n}\\&=-\rho^\epsilon(|\nabla\varphi_*|^2,\phi)\nabla\varphi_*\cdot{\bf n}=-\rho^\epsilon(|\nabla\varphi_*|^2,\phi)\nabla\varphi_*\cdot({\bf n}-\bar{\bf n})\leq \frac{C}{x_3^{\mathfrak a_1}},
\end{split}\end{equation}
where the assumption \eqref{Iboundary_algebratic_rateii} is used.
Note that the boundary convergence rates in \eqref{aijni'} is $-\mathfrak a_1$. This is main difference between the current case and the case where there is no external force decay as in \eqref{aijni}. Therefore,  with the help of the decay \eqref{crosssectionestimate'},  the $L^2$ convergence rates  $-(\mathfrak a_1 -1)$ can be established.

It follows from \eqref{crosssectionestimate'}, \eqref{aijni'} and the the same strategy to get the estimate \eqref{energyestimate2} in Step 2 ($b_i=0$) that one has
\begin{equation}\label{energyestimate3}
\int_{\Omega(T,T+1)}|\nabla W|^2dx\leq\frac{C}{T^{2(\mathfrak a_1-1)}}.
\end{equation}
\subsection{$L^\infty-$estimate} Based on the $L^2$ estimates obtained in the last sections, $L^\infty-$ norm of velocity fields can be established via Nash-Moser iteration. Since the general case is that $b_i\not=0$ and the boundary estimate, we only consider the equation \eqref{mathcalU} in Section \ref{2} and first prove the estimate near the boundary.
For any point $ \tilde x=(\tilde x_1,\tilde x_{2},\tilde x_{3})\in\partial{\Omega}$ and $\tilde x_3>0$ sufficiently large, suppose that
$ \big(x_1(y_1,y_2),x_2(y_1,y_2),x_3(y_1,y_2)\big) \in C^{2,\alpha}$ is the standard parametrization of $\Omega$ in a small neighborhood of $\tilde x$. Then unit outer normal vector $\bf{n}$ satisfying
\begin{equation}
\cos({\bf{n},x_i})=n_i(y_1,y_2)\in C^{1,\alpha} \quad\text{for $i=1,2,3$}.
\end{equation}

Define the  map $\mathcal{M}_{y}:y\rightarrow x$ as follows
\begin{equation}
x_i= x_i(y_1,y_2)+y_3^{-1}\int_{y_1}^{y_1+y_3}\int_{y_2}^{y_2+y_3}n_i(\alpha_1,\alpha_2)d\alpha_1d\alpha_2,\quad\text{for $i=1,2,3$}.
\end{equation}
Then the map $\mathscr T_{\tilde x}=\mathcal{M}_y^{-1}:x\rightarrow y$ makes the boundary flat and satisfies
\begin{equation*}
\mathscr T_{\tilde x}(U_{\delta}\cap{\Omega})\rightarrow B^+_{R}\quad\text{and}\quad \mathscr T_{\tilde x}(\partial U_{\delta}\cap{\Omega})\rightarrow \partial B^+_{R}\cap\{y_3=0\},
\end{equation*}
where $U_{\delta}$ is a neighborhood of $\tilde x$, and  $B_R^+=\{y_1^2+y_2^2+y_3^2<R,\ y_3>0\}$ with $\delta$ and $R$ uniform constants along the boundary of $\partial\Omega$. Denote the jacobian $\big(\frac{\partial y_i}{\partial x_j}\big)=D(x)$, then for any $\xi\in\mathbb{R}^3$, there exists a constant $C$ such that
\begin{equation}\label{jacobbi}
C^{-1}|\xi|\leq|D(x)\xi|\leq C|\xi|\quad\text{and}\quad C^{-1}|\xi|\leq|D^{-1}(x)\xi|\leq C|\xi|.
\end{equation}Moreover, the map also satisfies \text{ for $x\in\partial\Omega \  (i.e. \ y_3=0)$},
\begin{equation}\label{mapproperty}
\sum_{i=1}^3\frac{\partial y_j}{\partial x_i}\frac{\partial y_3}{\partial x_i}=0, \text{ for }j=1,2\text{ and } \bigg(\frac{\partial y_3}{\partial x_1},\frac{\partial y_3}{\partial x_2},\frac{\partial y_3}{\partial x_3}\bigg)\times {\textbf{n}}=0.\end{equation}
On the boundary $\partial\Omega$, denote
\begin{equation*}
g_1=\sum_{i,j=1}^3a_{ij}\partial_{j}\mathcal U n_i,\quad g_2=\sum_{i=1}^3b_i n_i \quad\text{and}\quad g_3=-\bar q n_3.
\end{equation*}
It follows from \eqref{bi} and \eqref{aijni} that
\begin{equation}\label{x g}
|g_1|\leq\frac{C}{\tilde x_3^{\mathfrak b}},\quad|g_2|\leq\frac{C}{\tilde x_3^{\mathfrak b_1}}\quad\text{and}\quad |g_3|\leq\frac{C}{\tilde x_3^{\mathfrak a_1+1}}\quad\text{on $\partial\Omega$},
\end{equation}
where $\mathfrak b=\min(\mathfrak a_1,\mathfrak b_1)$.

In the rest of the paper, $\tilde f$ denotes the function $f$ in $y$-coordinates. Because of \eqref{mapproperty}, one has
\begin{equation}\label{tilde g}
\sum_{i,j,l=1}^3\tilde a_{ij}\frac{\partial \mathcal U}{\partial y_l}\frac{\partial y_l}{\partial x_j}\frac{\partial y_3}{\partial x_i}\frac{1}{\mathcal W}=  \tilde g_1 \quad\text{and}\quad\sum_{i,l=1}^3\frac{\partial \mathcal U}{\partial y_l}\frac{\partial y_l}{\partial x_i}\frac{\partial y_3}{\partial x_i}\frac{1}{\mathcal W}=\frac{\partial \mathcal U}{\partial y_3}=\tilde g_3,
\end{equation} where $\mathcal W=\bigg(\sum\limits_{i=1}^3|\frac{\partial y_3}{\partial x_i}|^2\bigg)^{\frac{1}{2}}$.  It follows from \eqref{x g} that
\begin{equation}
|\tilde g_1|\leq\frac{C}{\tilde x_3^{\mathfrak b}},\quad|\tilde g_2|\leq\frac{C}{\tilde x_3^{\mathfrak b_1}}\quad\text{and}\quad|\tilde g_3|\leq\frac{C}{\tilde x_3^{\mathfrak a_1+1}}\quad \text{on $\partial B^+_{R}\cap\{y_3=0\}$}.
\end{equation}
Denote $g_4=\sum_{i=1}^3\partial_ib_i$, the straightforward computations give
\begin{equation}
 g_4\leq C\tilde x_3^{-\mathfrak b_1} \quad \text{and}\quad \tilde g_4\leq C\tilde x_3^{-\mathfrak b_1}.\end{equation}

After changing variables, the problem \eqref{mathcalU} can be written as follows
\begin{equation}\label{Y_equation}
\begin{cases}
\sum\limits_{i,j,l,s=1}^3\frac{\partial}{\partial y_s}\bigg(\tilde a_{ij}(y)\frac{\partial \mathcal U}{\partial y_l}\frac{\partial y_l}{\partial x_j}\bigg)\frac{\partial y_s}{\partial x_i}+\tilde g_4=0,&\quad\text{ in} B_R^+,\\[3mm]
\sum\limits_{i,s=1}^3\frac{\partial\mathcal U}{\partial y_s}\frac{\partial y_s}{\partial x_i}\frac{\partial y_3}{\partial x_i}=\tilde g_3\mathcal W, &\quad\text{ on }  B_R^+\cap\{y_3=0\}.
\end{cases}
\end{equation}
For any $\psi\in C^3_0(B_R^+)$, multiplying $\psi$ on both sides of \eqref{Y_equation} and integarting by parts yield
\begin{equation}
\sum_{i,j,s,l=1}^3\int_{B_{R}^+}\tilde a_{ij}\frac{\partial \mathcal U}{\partial y_l}\frac{\partial y_l}{\partial x_j}\frac{\partial y_s}{\partial x_i}\frac{\partial\psi}{\partial y_s}dy-\int_{B_R^+}\tilde g_4\psi dy=\sum_{i,j,l=1}^3\int_{ B_{R}^+\cap\{y_3=0\}}\tilde a_{ij}\frac{\partial \mathcal U}{\partial y_l}\frac{\partial y_l}{\partial x_j}\frac{\partial y_3}{\partial x_i}\psi dy'.
\end{equation}
Denote $A_{sl}=\sum\limits_{i,j=1}^3\tilde a_{ij}\frac{\partial y_s}{\partial x_j}\frac{\partial y_l}{\partial x_i}$. In virtue of \eqref{jacobbi}, we still using $\lambda$ and $\Lambda$ such that \begin{equation}\label{partialA}
\lambda|\xi|^2\leq\sum_{s,l=1}^3 A_{sl}\xi_s\xi_l\leq\Lambda|\xi|^2\quad \text{and}\quad \bigg|\frac{\partial A_{ls}}{\partial y_r}\bigg|\leq C.
\end{equation}It follows from \eqref{tilde g} that
\begin{equation}\label{Psi}
\sum_{s,l=1}^3\int_{B^+_R}A_{ls}\frac{\partial\mathcal U}{\partial y_l}\frac{\partial\psi}{\partial y_s}dy-\int_{B_R^+}\tilde g_4\psi dy=\int_{ B_R^+\cap\{y_3=0\}}\psi\tilde g_1\mathcal W dy_1dy_2.
\end{equation}
Denote $g=\frac{\tilde g_1\mathcal W}{A_{33}}$. By the definition of $A_{33}$, one has
\begin{equation}
\lambda \mathcal W^2\leq \tilde a_{ij}\frac{\partial y_3}{\partial x_i}\frac{\partial y_3}{\partial x_j}=A_{33}\leq \Lambda \mathcal W^2.
\end{equation}

Given $\varsigma(z')\in C^2_0(\mathbb R^2)$ satisfying $\int_{\mathbb R^2}\varsigma(z') dz'=1$, define
\begin{equation}\label{vartheta1}
\vartheta(y)=y_3\int _{\mathbb R^2}g(y'-y_3z')\varsigma(z')dz'.
\end{equation}
Then
\begin{equation}\label{varthetaboundary}
\vartheta (y',0)=\frac{\partial\vartheta}{\partial y_1}(y',0)=\frac{\partial\vartheta}{\partial y_2}(y',0)=0 \quad \text{and}\quad \frac{\partial\vartheta}{\partial y_3}(y',0)=g(y').
\end{equation}
The straightforward computations yield
\begin{equation}\label{vartheta}
\|\vartheta\|_{C^2{(B_R^+)}}\leq \frac{C}{\tilde x_3^{\mathfrak b}}.
\end{equation}
Define
$\mathfrak u=\partial_s( A_{sl}\partial_l\vartheta)$ and $\kappa=\mathfrak u +\tilde g_4$. It follows from \eqref{vartheta} and the definition of $A_{sl}$ that one has
\begin{equation}\label{f}
\|\mathfrak u\|_{L^{\infty}( B_{R}^+)}\leq \frac{C}{\tilde x_3^{\mathfrak b}},\quad \|\kappa\|_{L^{\infty}( B_{R}^+)}\leq \frac{C}{\tilde x_3^{\mathfrak b}}
\end{equation}
and \text{for any }$\psi\in C_0^3(B^+_R)$
\begin{equation}\label{vartheta_estimate}-\sum_{s,l=1}^3\int_{B_R^+}A_{ls}\partial_l\vartheta\partial_s\psi+\sum_{l=1}^3\int_{ B_{R}^+\cap\{y_3=0\}}A_{l3}\partial_l \vartheta\psi dx=\int_{B_R^+}(\mathfrak u+\tilde g_3)\psi dx.\end{equation}
Combining \eqref{Psi} and \eqref{vartheta_estimate} yields
\begin{equation}\label{u}
\sum_{s,l=1}^3\int_{B^+_R}A_{ls}\frac{\partial(\mathcal U-\vartheta)}{\partial y_l}\frac{\partial\psi}{\partial y_s}dy=\int_{B_R^+}\kappa\psi dy,
\end{equation} where the boundary conditions \eqref{varthetaboundary} have been used.
Denote $v=\mathcal U-\vartheta$. Replacing $\psi$ by each $\frac{\partial\psi}{\partial y_i}\ (i=1,2,3)$ in \eqref{u} and integrating by parts yield
\begin{equation}\label{u_estimate}
\begin{split}
\quad&\int_{ B_R^+}\kappa\frac{\partial \psi}{\partial y_i}dy=-\sum_{l,s=1}^3\int_{B_R^+}A_{ls}\frac{\partial}{\partial y_l}\bigg(\frac{\partial v}{\partial y_i}\bigg)\frac{\partial\psi}{\partial y_s}dy\\&-\sum_{l,s=1}^3\int_{B_R^+}\frac{\partial A_{ls}}{\partial y_i}\frac{\partial v}{\partial y_l}\frac{\partial\psi}{\partial y_s}dy+\delta_{i3}\sum_{l,s=1}^3\int_{ B_{R}^+\cap\{y_3=0\}}A_{ls}\frac{\partial v}{\partial y_l}\frac{\partial\psi}{\partial y_s}ds,\quad\text{for $i=1,2,3$}.
\end{split}
\end{equation}

Define
\begin{equation*}
\Theta=\max\limits_{ B_R^+\cap\{y_3=0\}}\bigg|\frac{\partial v}{\partial y_3}\bigg|,\quad w_1=\frac{\partial v}{\partial y_1},\quad w_2=\frac{\partial v}{\partial y_2}\quad\text{and}\quad w_3=\frac{\partial v}{\partial y_3}-\Theta.\end{equation*}
It follows from \eqref{tilde g} and \eqref{vartheta} that  \begin{equation}\label{Theta}
\Theta\leq\max\limits_{ B_R^+\cap\{y_3=0\}}\bigg|\frac{\partial \mathcal U}{\partial y_3}\bigg|+\max\limits_{ B_R^+\cap\{y_3=0\}}\bigg|\frac{\partial \vartheta}{\partial y_3}\bigg|\leq \frac{C}{\tilde x_3^{\mathfrak b}}.\end{equation}
Moreover, the expression \eqref{u_estimate} can be written as, for $i=(1,2,3)$
\begin{equation}
\label{uw_estimate}
\begin{split}
&\quad\sum_{l,s=1}^3\int_{B_R^+}A_{ls}\frac{\partial w_i}{\partial y_l}\frac{\partial\psi}{\partial y_s}dy+\sum_{l,s=1}^3\int_{B_R^+}w_l\frac{\partial A_{ls}}{\partial y_i}\frac{\partial \psi}{\partial y_s}dy\\[3mm]
&=\delta_{i3}\sum_{l,s=1}^3\int_{ B_{R}^+\cap\{y_3=0\}}A_{ls}\frac{\partial v}{\partial y_l}\frac{\partial\psi}{\partial y_s}ds-\int_{ B_R^+}\kappa\frac{\partial \psi}{\partial y_i}dy-\delta_{l3}\Theta\sum_{l,s=1}^3\int_{B_R^+}\frac{\partial A_{ls}}{\partial y_i}\frac{\partial \psi}{\partial y_s}dy.
\end{split}
\end{equation}

Now we use Nash-Moser iteration to get the $L^\infty-$norm of $w_i$. We consider only the case $w_i\geq 0$. If  $w_i\geq0$ does not hold, one can repeat the proof for $w_i^+$ and $w_i^-$, respectively. It is easy to see that
\begin{equation}\label{w3}
w_3=0 \quad\text{on $B_{R}^+\cap\{y_3=0\}$}.\end{equation}
For $i=1,2,3$, denote $\psi_i=\eta^2{w}_i^{\mu+1}$ with some $\mu\geq0$ and some nonnegative function $\eta\in C^2_0({B^+_R})$.
Direct calculations give
\begin{equation*}
\frac{\partial\psi_i}{\partial y_k}=2\eta\frac{\partial \eta}{\partial y_k}{w}_i^{\mu+1}+\eta^2(\mu+1)\frac{\partial w_i}{\partial y_k}{w_i}^\mu, \quad\text{for $k=1,2,3$}.
\end{equation*}
If one replaces $\psi$ by $\psi_i$ $(i=1,2,3)$ in \eqref{uw_estimate}, then it holds
\begin{equation}
\begin{split}
&\quad\sum_{l,s=1}^3\int_{B_R^+}A_{ls}\frac{\partial w_i}{\partial y_l}\frac{\partial\psi_i}{\partial y_s}dy+\sum_{l,s=1}^3\int_{B_R^+}w_l\frac{\partial A_{ls}}{\partial y_i}\frac{\partial \psi_i}{\partial y_s}dy\\
&=-\int_{ B_R^+}\kappa\frac{\partial \psi_i}{\partial y_i}dy-\Theta\sum_{s=1}^3\int_{B_R^+}\frac{\partial A_{3s}}{\partial y_i}\frac{\partial \psi_i}{\partial y_s}dy\\[2mm]
&=-\sum_{s=1}^3\int_{B_R^+}(\delta_{is}\kappa+\Theta\frac{\partial A_{3s}}{\partial y_i})\frac{\partial\psi_i}{\partial y_s}dy, \quad\text{for $i=1,2,3$}.\end{split}
\end{equation}
where the boundary term vanishes due to \eqref{w3}.
For $i=1,2,3$, the straightforward computations give
\begin{equation}\label{A_estimate}
\begin{split}
&\quad\sum_{l,s=1}^3\int_{B_R^+}A_{ls}\frac{\partial w_i}{\partial y_l}\bigg(2\eta\frac{\partial \eta}{\partial y_s}{w}_i^{\mu+1}+\eta^2(\mu+1)\frac{\partial w_i}{\partial y_s}{w_i}^\mu\bigg) dy\\[2mm]
&\geq\lambda(\mu+1)\int_{B_R^+}\eta^2w_i^\mu|Dw_i|^2
dy-2\Lambda\int_{B_R^+}\eta w_i^{\mu+1}|D\eta||Dw_i|dy\\[2mm]&\geq\lambda(\mu+1)\int_{B_R^+}\eta^2w_i^\mu|Dw_i|^2
dy-\epsilon\int_{B_R^+}\eta^2w_i^\mu|Dw_i|^2-\frac{1}{\epsilon}\int_{B_R^+}|D\eta|^2w_i^{\mu+2}dy
\end{split}
\end{equation}
and
\begin{equation}\label{w_A_estimate}
\begin{split}
&\quad\sum_{l,s=1}^3\int_{B_R^+}w_l\frac{\partial A_{ls}}{\partial y_i}\frac{\partial \psi_i}{\partial y_s}dy\\&\leq C\sum_{l,s=1}^3\int_{B_R^+} \eta|D\eta|w_i^{\mu+1}\bigg|\frac{\partial A_{ls}}{\partial y_i}w_l\bigg|+\bigg|\frac{\partial A_{ls}}{\partial y_i}w_l\bigg|\eta^2(\mu+1)w_i^{\mu}|Dw_i|dy\\
&\leq C\sum_{l,s=1}^3\int_{B_R^+}|D\eta|^2w_i^{\mu+2}+\bigg(\eta^2+\frac{1}{\epsilon}\bigg)\bigg|\frac{\partial A_{ls}}{\partial y_i}w_l\bigg|^2w_i^\mu+\epsilon\eta^2(\mu+1)^2w_i^\mu|Dw_i|^2dy\\
&\leq C\int_{B_R^+}|D\eta|^2w_i^{\mu+2}+\eta^2|\bar w|^2w_i^\mu+\epsilon\eta^2(\mu+1)^2w_i^\mu|Dw_i|^2+\frac{1}{\epsilon}|\bar w|^2\eta^2w_i^\mu dy,
\end{split}
\end{equation}
where  $|\bar w|^2=w_1^2+w_2^2+w_3^2$.
Denote
\begin{equation}\mathcal F_{is}=\delta_{is}\kappa+\Theta\frac{\partial A_{3s}}{\partial y_i} \quad\text{and} \quad\mathcal K=\max|\mathcal F_{is}|.\end{equation} It follows from \eqref{f} and \eqref{Theta} that  \begin{equation}\label{F}
|\mathcal K|\leq \frac{C}{\tilde x_3^{\mathfrak b}}.
\end{equation}
If $|w_i|\leq \mathcal K$, the estimate \eqref{Thm velocity} holds. Hence we assume
\begin{equation}\label{wi}|w_i|\geq \mathcal K,\quad (i=1,2,3).\end{equation} Therefore, one has
\begin{equation}\label{F_estimate}\begin{split}
&\quad\sum_{s=1}^3\int_{B_{R}^+}\mathcal F_{is}\frac{\partial\psi_i}{\partial y_s}dy\leq \int_{B_R^+}\mathcal K\eta|D\eta|w_i^{\mu+1}+\mathcal K\eta^2(\mu+1)w_i^\mu|Dw_i|dy\\
&\leq  \int_{B_R^+}\eta|D\eta|w_i^{\mu+2}+\epsilon\eta^2(\mu+1)^2w_i^\mu|Dw_i|^2+\frac{1}{\epsilon}\eta^2\mathcal K^2w_i^\mu dy\\
&\leq \int_{B_R^+}\eta|D\eta|w_i^{\mu+2}+\epsilon\eta^2(\mu+1)^2w_i^\mu|Dw_i|^2+\frac{1}{\epsilon}\eta^2w_i^{\mu+2} dy.  \end{split}
\end{equation}
Combining \eqref{A_estimate}, \eqref{w_A_estimate}, and \eqref{F_estimate} yields
\begin{equation}
\begin{split}
&\quad\lambda(\mu+1)\int_{B_R^+}\eta^2w_i^\mu|Dw_i|^2dy- \big(\epsilon+2\epsilon(\mu+1)^2\big)
\int_{ B_R^+}\eta^2w_i^\mu|Dw_i|^2dy\\
&\leq (C+\frac{C}{\epsilon})\int_{ B_R^+}\eta^2|\bar w|^2w_i^\mu dy+(C+\frac{C}{\epsilon})\int_{ B_R^+}(\eta^2+|D\eta|^2)w_i^{\mu+2} dy.
\end{split}\end{equation}
If one chooses $\epsilon=\frac{\lambda}{8(\mu+1)}$, then it holds that
\begin{equation}
\int_{ B_R^+}\eta^2w_i^2|Dw_i|^2dy\leq C\int_{ B_R^+}|D\eta|^2|w_i|^{\mu+2}+\eta^2|w_i|^{\mu+2}+\eta^2|\bar w|^2|w_i|^\mu dy.
\end{equation}
Therefore, one has
\begin{equation}
\int_{ B_R^+}\bigg|D(\eta w_i^\frac{\mu+2}{2})\bigg|^2dy\leq C(\mu+2)^2\int_{ B_R^+}|D\eta|^2|w_i|^{\mu+2}+\eta^2|w_i|^{\mu+2}+\eta^2|\bar w|^2|w_i|^\mu dy.
\end{equation}
Applying Sobolev inequality yields
\begin{equation}\label{Sobolev}
\bigg(\int_{ B_R^+}(\eta w_i^\frac{\mu+2}{2})^6dy\bigg)^\frac{1}{3}\leq C(\mu+2)^2\int_{ B_R^+}|D\eta|^2|w_i|^{\mu+2}+\eta^2|w_i|^{\mu+2}+\eta^2|\bar w|^2|w_i|^\mu dy.
\end{equation}

Set \begin{equation*}
R_j=(\frac{1}{2}+\frac{1}{2^{j+1}})R\quad\text{and}\quad \gamma_j=2\cdot 3^j.
\end{equation*}
Let $\eta_j\in C_0^\infty(B_{R_j}^+)$ satisfy \begin{equation*}
\quad \eta_j=1\text{ in } B_{R_{j+1}}^+ \quad\text{and}\quad |D\eta_j|\leq\frac{4}{R_j-R_{j+1}}.
\end{equation*}
Choose $\mu=\gamma_j-2$, it follows from \eqref{Sobolev} that
\begin{equation*}
\bigg(\int_{ B_{R_{j+1}}^+}w_i^{\gamma_{j+1}}dy\bigg)^\frac{1}{3}\leq C\gamma_j^2\int_{ B_{R_j}^+}\big(2^{j+1}R\big)^2w_i^{\gamma_j}+w_i^{\gamma_j}+|\bar w|^2w_i^{\gamma_j-2} dy.
\end{equation*}
Thus,
\begin{equation}
\bigg(\int_{ B_{R_{j+1}}^+}w_i^{\gamma_{j+1}}dy\bigg)^{\frac{1}{\gamma_{j+1}}}\leq\bigg(\int_{ B_{R_j}^+}A_jw_i^{\gamma_j}+B_jw_i^{\gamma_j}+B_j|\bar w|^2w_i^{\gamma_j-2} dy\bigg)^\frac{1}{\gamma_j},
\end{equation}
where $A_j=C\gamma_j^2\big(2^{j+1}/R\big)^2$ and $B_j=C\gamma_j^2$.
Note that
\begin{equation}
\int_{B_{R_j}^+}|\bar w|^2w_i^{\gamma_j-2}dy\leq\bigg(\int_{ B_{R_j}^+}w_i^{\gamma_j}\bigg)^\frac{\gamma_j-2}{\gamma_j}\bigg(\int_{ B_{R_j}^+}|\bar w|^{\gamma_j}\bigg)^{\frac{2}{\gamma_j}}.
\end{equation}
Therefore, one has
\begin{equation}
\begin{split}
&\quad\bigg(\int_{ B_{R_{j+1}}^+}w_i^{\gamma_{j+1}}dy\bigg)^{\frac{1}{\gamma_{j+1}}}\\
&\leq \bigg[A_j\int_{ B_{R_j}^+}w_i^{\gamma_j}+B_j\int_{ B_{R_j}^+}w_i^{\gamma_j}+B_j\bigg(\int_{ B_{R_j}^+}w_i^{\gamma_j}\bigg)^\frac{\gamma_j-2}{\gamma_j}\bigg(\int_{ B_{R_j}^+}\bar w^{\gamma_j}\bigg)^{\frac{2}{\gamma_j}}\bigg]^\frac{1}{\gamma_j}\\[2mm]
&\leq \|w_i\|_{L^{\gamma_j}(B_{R_{j}}^+)}^{\frac{\gamma_j-2}{\gamma_j}}\bigg[(A_j+B_j)\|w_i\|_{L^{\gamma_j}(B_{R_{j}}^+)}^2+B_j\|\bar w\|_{L^{\gamma_j}(B_{R_{j}}^+)}^2\bigg]^{\frac{1}{\gamma_j}}\\[2mm]&\leq\|w_i\|_{L^{\gamma_j}(B_{R_{j}}^+)}^{\frac{\gamma_j-2}{\gamma_j}}(A_j+2B_j)^{\frac{1}{\gamma_j}}\|\bar w\|_{L^{\gamma_j}(B_{R_{j}}^+)}^\frac{2}{\gamma_j}.
\end{split}
\end{equation}
This implies that
\begin{equation}\label{w}
\begin{split}
&\quad\sum\limits_{i=1}^{3}\|w_i\|_{L^{\gamma_j+1}(B_{R_{j+1}}^+)}\\&\leq\sum\limits_{i=1}^3\|w_i\|_{L^{\gamma_j}(B_{R_{j}}^+)}^{\frac{\gamma_j-2}{\gamma_j}}(A_j+2B_j)^{\frac{1}{\gamma_j}}\|\bar w\|_{L^{\gamma_j}(B_{R_{j}}^+)}^\frac{2}{\gamma_j}\\
&\leq \bigg[\sum\limits_{i=1}^3\bigg(\|w_i\|_{L^{\gamma_j}(B_{R_{j}}^+)}^{\frac{\gamma_j-2}{\gamma_j}}(A_j+2B_j)^{\frac{1}{\gamma_j}}\|\bar w\|_{L^{\gamma_j}(B_{R_{j}}^+)}^\frac{2}{\gamma_j}\bigg)^{\frac{\gamma_j}{\gamma_j-2}}\bigg]^\frac{\gamma_j-2}{\gamma_j}3^{\frac{2}{\gamma_j}}\\
&\leq(9A_j+18B_j)^\frac{1}{\gamma_j}\bigg(\sum\limits_{i=1}^{3}\|w_i\|_{L^{\gamma_j}(B_{R_{j}}^+)}\bigg)^\frac{2}{\gamma_j}\bigg(\sum\limits_{i=1}^{3}\|w_i\|_{L^{\gamma_j}(B_{R_{j}}^+)}\bigg)^{\frac{\gamma_j-2}{\gamma_j}}\\
&\leq (9A_j+18B_j)^\frac{1}{\gamma_j}\sum\limits_{i=1}^{3}\|w_i\|_{L^{\gamma_j}(B_{R_{j}}^+)}.
\end{split}
\end{equation}

Set \begin{equation*}Q_{j+1}=\sum\limits_{i=1}^{3}\|w_i\|_{L^{\gamma_j+1}(B_{R_{j+1}}^+)}\quad \text{and} \quad S_j=(9A_j+18B_j)^\frac{1}{\gamma_j}.
\end{equation*}
Then the estimate \eqref{w} can be written as
\begin{equation}
Q_{j+1}\leq S_j Q_j.
\end{equation}
Obviously,
\begin{equation}
S_j=(9A_j+18B_j)^\frac{1}{\gamma_j}\leq \bigg(C\gamma_j^2\big(2^{j+1}/R\big)^2\bigg)^\frac{1}{\gamma_j}\leq C^\frac{1}{\gamma_j}16^{\frac{j}{\gamma_j}}.
\end{equation}
Hence,
\begin{equation*}
Q_{j+1}\leq C^{\sum\limits_{i=1}^j\frac{1}{\gamma_i}}16^{\sum\limits_{i=1}^j\frac{i}{\gamma_i}}Q_0.
\end{equation*}
Note that
\begin{equation*}
\sum\limits_{i=1}^j\frac{1}{\gamma_i}\leq C\quad \text{and}\quad\sum\limits_{i=1}^j\frac{i}{\gamma_i}\leq C.
\end{equation*}
Taking $j\rightarrow\infty$ yields
\begin{equation}
\sum\limits_{i=1}^3\|w_i\|_{L^\infty(B_{\frac{1}{2}R}^+)}\leq C\sum\limits_{i=1}^3\|w_i\|_{L^2(B_R^+)},
\end{equation}
provided that \eqref{wi} holds.
Therefore, one has
\begin{equation}\label{nash-moser}
\sum\limits_{i=1}^3\|w_i\|_{L^\infty(B_{\frac{1}{2}R}^+)}\leq C\sum\limits_{i=1}^3\|w_i\|_{L^2(B_R^+)}+\mathcal K.
\end{equation}
It follows from the definition of $w_i$, \eqref{vartheta}, \eqref{f} and \eqref{Theta} that
\begin{equation}\label{bdygradientPsi}
\|\nabla\mathcal U\|_{L^{\infty}(U_\delta)}\leq C\big(\|\nabla\mathcal U\|_{L^2(U_\delta)}+\mathcal K+\Theta+\|\vartheta\|_{C^2{(B_R^+)}}\big).
\end{equation}
As same as the estimate for \eqref{bdygradientPsi} with $\vartheta=0$, $\kappa=\mathfrak u$ and $\Theta=0$, for any $B_R\in\Omega$, one has
\begin{equation}
\|\nabla\mathcal U\|_{L^\infty(B_{\frac{R}{2}})}\leq C\|\nabla\mathcal U\|_{L^2(B_R)}.
\end{equation}
In a word, when the boundary satisfies the convergence rate \eqref{Iboundary_algebratic_rateii} and the external force $\phi$ satisfies \eqref{force algebratic rate}, we have
\begin{equation}\label{step2 nash-moser}
|\nabla\varphi^\epsilon-(0,0,\bar q))|\leq Cx_3^{-\mathfrak b}.
\end{equation}

For the case where the nozzle is a perfect cylinder if $x_3$ is  sufficiently large, the same as \eqref{step2 nash-moser} with $g_1=g_2=g_3=g_4=0$, we can prove that
there exists a positive constant $\mathfrak d$ such that
\begin{equation}\label{cylinder boundary'}
|\nabla\varphi^\epsilon-\nabla\varphi_*|\leq Ce^{-\mathfrak d x_3}.
\end{equation}

For the case where the nozzle boundary satisfies \eqref{Iboundary_algebratic_rateii} with $\mathfrak a_1>1$, one can follow the proof of estimate \eqref{step2 nash-moser} with $g_2=g_4=0$, $|g_1|\leq CT^{-\mathfrak a_1}$, and $|g_3|\leq CT^{-\mathfrak a_1}$ to show
\begin{equation}\label{cylinder boundary''}
\|\nabla\varphi^\epsilon-\nabla\varphi_*\|_{L^\infty(\Omega(T,T+1))}\leq C\|\nabla W\|_{L^2(\Omega(T,T+1))}+ CT^{-\mathfrak a_1}\leq  C T^{-\mathfrak a_1+1}.
\end{equation}
Hence the proof of Theorem \ref{Theorem convergence rate} is completed.
Furthermore, one can also use the Nash-Moser iteration to get the desired estimates \eqref{Icylinder boundary} and \eqref{Ibdecay} in Theorem \ref{incompressible case}.

\medskip
	\textbf{Acknowledgement.}
	The research of Lei Ma and Chunjing Xie were partially supported by NSFC grants 11971307, 11631008, and 11422105. Xie is also supported by Young Changjiang Scholar of Ministry of Education in China. The research of Tian-Yi Wang was supported in part by the NSFC Grant No. 11601401 and 11971024.
	\nocite{*}
	\bibliographystyle{abbrv}
	\bibliography{ref}

 \end{document}